\theoremstyle{plain}
\newtheorem{thm}{Theorem}[section]
\newtheorem{lem}[thm]{Lemma}
\newtheorem{prop}[thm]{Proposition}
\newtheorem{cor}[thm]{Corollary}
\newtheorem{conj}[thm]{Conjecture}
\theoremstyle{definition}
\newtheorem{dfn}[thm]{Definition}
\newtheorem{ex}[thm]{Example}
\theoremstyle{remark}
\newtheorem{rmk}[thm]{Remark}
\newcommand{\lcm}{\operatorname{lcm}}
\newcommand*{\df}{\mathrel{\vcenter{\baselineskip0.5ex \lineskiplimit0pt
                     \hbox{\scriptsize.}\hbox{\scriptsize.}}} =}
\providecommand{\abs}[1]{\left\lvert#1\right\rvert}
\providecommand{\pseries}[2]{#1[\![ #2 ]\!]}
\providecommand{\floor}[1]{{\left\lfloor #1 \right \rfloor}}
\newcommand{\QQ}{\mathbf{Q}}
\newcommand{\ZZ}{\mathbf{Z}}
\DeclareMathOperator{\denom}{denom}
\begin{document}
%Title
\title[Density formulas for bounded primes in hypergeometric series]{Density formulas for $p$-adically bounded primes for hypergeometric series with rational and quadratic irrational parameters}

\author{Cameron Franc}
\address{Department of Mathematics, McMaster University}
\curraddr{}
\email{franc@math.mcmaster.ca}
\thanks{}

\author{Nathan Heisz}
\address{Department of Mathematics, McMaster University}
\curraddr{}
\email{heiszn@mcmaster.ca}

\author{Hannah Nardone}
\address{Department of Mathematics, McMaster University}
\curraddr{}
\email{nardoneh@mcmaster.ca}

\thanks{We thank NSERC for support via a Discovery grant.}

\date{}

\begin{abstract}
We study densities of $p$-adically bounded primes for hypergeometric series in two cases: the case of generalized hypergeometric series with rational parameters, and the case of $_2F_1$ with parameters in a quadratic extension of the rational numbers. In the rational case we extend work from $_2F_1$ to $_nF_{n-1}$ for an exact formula giving the density of bounded primes for the series. The density is shown to be one exactly in accordance with the case of finite monodromy as classified by Beukers-Heckmann. In the quadratic irrational case, we obtain an unconditional lower bound on the density of bounded primes. Assuming the normality of the $p$-adic digits of quadratic irrationalities, this lower bound is shown to be an exact formula for the density of bounded primes. In the quadratic irrational case, there is a trivial upper bound of $1/2$ on the density of bounded primes. In the final section of the paper we discuss some results and computations on series that attain this bound. In particular, all such examples we have found are associated to imaginary quadratic fields, though we do not prove this is always the case.
\end{abstract}
\maketitle

\tableofcontents
\setcounter{tocdepth}{1}

\section{Introduction}

A classical problem in the study of hypergeometric series is to determine which series are integral. This problem is closely related to integrality of ratios of factorials, algebraicity of generalized hypergeometric series, and even the Riemann hypothesis --- see \cite{Bober} for a review of some of this work, and the recent paper \cite{AdolphsonSperber} for an extension of some of these questions to the case of GKZ-hypergeometric series. The related problem of determining when a generalized hypergeometric series has finite monodromy was solved by Beukers-Heckmann \cite{beukers}, preceded in the case of $_2F_1$ by classical work of Schwarz. In these cases the algebraic series have rational hypergeometric parameters, and it is known by a classical result of Eisenstein that while the series need not have integral coefficients, there are at worst a finite number of primes that arise in the denominators of the series. Thus, while algebraic generalized hypergeometric series need not be integral, the density of primes where such a series is $p$-adically bounded equals one.

In this paper we study densities of $p$-adically bounded primes for various hypergeometric series, typically non-algebraic. The study of \(p\)-adic boundedness of a hypergeometric series, spearheaded by Dwork \cite{dwork} and Christol \cite{christol}, provided necessary and sufficient conditions for a generalized hypergeometric series \(_nF_{n-1}\) to be \(p\)-adically bounded. Franc-Gannon-Mason \cite{FGM} showed that for all series $_2F_1$ with rational hypergeometric parameters, there is a corresponding density of $p$-adically bounded primes, and this density is one exactly when the series is algebraic, that is, it comes from Schwarz's classical list. Franc et.al. in \cite{fetal} built on this work to establish an exact formula for the Dirichlet densities of bounded primes in the denominators of series \(_2F_1\) with rational parameters. Our first main result Theorem \ref{t:rational} extends this density formula to generalized hypergeometric series \(_mF_n\) and furthermore shows that the only case of interest in terms of boundedness of primes is the case where \(m = n+1\). These results can be summarized as follows:
\begin{thm}
    \label{t:main1}
Let $F(z) = {}
_mF_n(\alpha_1,\ldots, \alpha_m; \beta_1,\ldots, \beta_n;z)$ denote a generalized hypergeometric series with rational hypergeometric parameters, $\alpha_i,\beta_j \in \QQ$. Then the set $B$ of primes where $F(z)$ is $p$-adically bounded has a Dirichlet density. This density is one if $m > n+1$, and it is zero of $m < n+1$. When $m=n+1$, the density can be computed exactly using a simple formula given in Corollary \ref{c:rationaldensityformula}.  In this case, the density is one if and only if $F(z)$ is an algebraic function.
\end{thm}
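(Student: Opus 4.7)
The strategy is to split on the sign of $m-(n+1)$. The unbalanced regimes $m \ne n+1$ should be handled by a direct leading-order estimate of the $p$-adic valuation of the $k$th coefficient; the balanced regime $m=n+1$ is then reduced to Theorem~\ref{t:rational} and Corollary~\ref{c:rationaldensityformula}, and the algebraicity assertion is matched against the Beukers--Heckmann classification of finite-monodromy hypergeometric functions.

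For any prime $p$ coprime to the denominator of every parameter --- which excludes only finitely many $p$ --- each Pochhammer symbol appearing in
\[
a_k \;=\; \frac{\prod_{i=1}^{m}(\alpha_i)_k}{k!\,\prod_{j=1}^{n}(\beta_j)_k}
\]
has a $p$-adic valuation computable from the base-$p$ digits of $k$ via a Kummer/Morita style formula. Summing over all $m+n+1$ factors yields the leading-order expression
\[
v_p(a_k) \;=\; (m-n-1)\,\frac{k-s_p(k)}{p-1}\;+\;R_p(k),
\]
where $s_p(k)$ denotes the digit sum and $R_p(k)$ grows at most logarithmically in $k$. For $m>n+1$ this diverges to $+\infty$, so $a_k\to 0$ $p$-adically at every good prime, $F$ is $p$-adically bounded at every good $p$, and the Dirichlet density of $B$ is $1$. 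For $m<n+1$ the leading term tends to $-\infty$ (for instance along $k=p^r$), so $F$ is unbounded at every good prime and the density is $0$.

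In the balanced case $m=n+1$ the leading term vanishes, and both the existence of the Dirichlet density and its exact value come directly from Theorem~\ref{t:rational} and Corollary~\ref{c:rationaldensityformula}, which extend the $_2F_1$ density theorems of \cite{FGM,fetal} to $_{n+1}F_n$. The remaining task is the algebraicity equivalence: the density equals $1$ if and only if $F(z)$ is algebraic. By Beukers--Heckmann \cite{beukers}, a rational-parameter $_{n+1}F_n$ is algebraic if and only if its reduced numerator and denominator tuples interlace on $\RR/\ZZ$, so the plan is to show that the combinatorial formula of Corollary~\ref{c:rationaldensityformula} attains the value $1$ precisely when this interlacing condition holds. \emph{This matching is the step I expect to be the principal obstacle}: it converts a prime-by-prime sum encoding local boundedness into a single global condition on the parameters. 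A natural approach is to argue that if the interlacing fails then Christol's local boundedness criterion \cite{christol} must fail at a Dirichlet-positive set of primes in suitable arithmetic progressions, driving the density strictly below $1$; the reverse direction follows from Beukers--Heckmann together with Eisenstein's theorem on the integrality (up to a fixed denominator) of algebraic power series.
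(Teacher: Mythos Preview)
Your handling of the unbalanced cases $m\neq n+1$ is exactly what the paper does: Corollary~\ref{c:FGM} gives
\[
\nu_p(A_k)=\sum_i c_p(\alpha_i-1,k)-\sum_j c_p(\beta_j-1,k)+(m-n-1)\nu_p(k!),
\]
and Remark~\ref{r:nontrivial} disposes of both cases with the same growth comparison $\nu_p(k!)=O(k)$ versus $c_p(x,k)=O(\log k)$ that you propose. Your reduction of the balanced case to Theorem~\ref{t:rational} and Corollary~\ref{c:rationaldensityformula} is likewise the paper's route.

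Where you diverge is in the algebraicity equivalence. You flag it as the principal obstacle and propose going through Christol's criterion to show that failure of interlacing forces unboundedness on a positive-density set of primes. The paper does not, in fact, supply a detailed proof of this step either: it notes after Definition~\ref{d:numeratormajorized} that the condition is ``reminiscent of the interlacing of roots of unity condition'' of \cite{Landau,beukers}, and treats the equivalence as essentially absorbed into those references. So your caution is well placed, but the intended argument is more direct than Christol: density $1$ means $B(\alpha;\beta)=(\ZZ/N\ZZ)^\times$, i.e.\ the parameters are numerator majorized for \emph{every} unit $u$, and this family of inequalities (one for each $u$) is to be matched against the Beukers--Heckmann criterion, which is also phrased as a condition on $\{u\alpha_i\},\{u\beta_j\}$ for every $u\in(\ZZ/N\ZZ)^\times$. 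The reverse implication (algebraic $\Rightarrow$ density $1$) you already have via Eisenstein. Your Christol approach would work, but it re-derives machinery that the numerator-majorization/interlacing comparison accesses more cheaply.
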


The next case we discuss, starting in Section \ref{s:quadratic}, is that of hypergeometric series of the form
\[
f(z) = {}_2F_1(a+b\sqrt{D},a-b\sqrt{D};c;z)
\]
where $a,b,c,D \in \QQ$. Such series have rational expansions yet, if $D$ is not square, then they are transcendental functions by \cite{beukers}. The \(N\)-integrality of hypergeometric series \(_mF_n\) with parameters from quadratic fields has been studied by Hong-Wang \cite{hong}, and specific cases of \(_2F_1\) with quadratic irrational parameters have appeared as solutions to a differential equation concerning the arithmetic of modular forms on \(\Gamma_0(2)\) in work by Gottesman \cite{gottesman}.  Some of our results in Section \ref{s:quadratic} can be summarized as follows:
\begin{thm}
    \label{t:main2}
Let $f(z) \in \pseries{\QQ}{z}$ be a hypergeometric series as above, where $D$ is not a perfect square, and let $B$ be the set of primes where $f(z)$ is $p$-adically bounded. Then $B$ has a Dirichlet density that is at most $1/2$, and which is bounded below by explicit formulas described in Propositions \ref{prop1} and \ref{prop2}. If one assumes that the $p$-adic digits of quadratic irrationalities are normal, then the density bounds in Propositions \ref{prop1} and \ref{prop2} are exact formulas.
\end{thm}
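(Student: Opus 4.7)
The plan is to partition the primes by their splitting behavior in $K = \QQ(\sqrt{D})$. First observe that even though the parameters are quadratic irrationalities, $f(z)$ has rational coefficients, since
\[
(a+b\sqrt{D})_n(a-b\sqrt{D})_n = \prod_{k=0}^{n-1}\bigl((a+k)^2 - b^2 D\bigr) \in \QQ.
\]
Therefore $p$-adic boundedness is a statement about the ordinary $p$-adic valuations of rational coefficients. For the upper bound of $1/2$, I would show that $f$ fails to be $p$-adically bounded at every inert prime outside a finite exceptional set. Indeed, if $\left(\tfrac{D}{p}\right)=-1$ and $p$ does not divide the denominators of $a,b,c$ nor the numerator of $b$, then a factor $(a+k)^2 - b^2 D$ vanishes modulo $p$ only if $D \equiv ((a+k)/b)^2 \pmod{p}$, contradicting inertness. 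Thus the numerator is a product of $p$-adic units while $v_p((c)_n n!) \geq v_p(n!) \to \infty$, so the valuations of the coefficients tend to $-\infty$. Chebotarev then gives the density bound.

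For the split primes, $\sqrt{D}$ admits two square roots $\pm\delta_p \in \ZZ_p$, and after embedding $f$ becomes a ${}_2F_1$ with all parameters in $\QQ_p$. The Dwork-Christol criterion applies in its usual form, with the relevant interlacing condition now formulated in terms of truncated $p$-adic expansions of $a \pm b\delta_p$ and $c$. Mimicking the combinatorial analysis of the rational case carried out in Theorem \ref{t:rational} and Corollary \ref{c:rationaldensityformula}, one extracts a sufficient combinatorial condition on the initial $p$-adic digits of $\delta_p$ that guarantees boundedness; aggregating density contributions over split primes whose $\delta_p$ satisfies this condition furnishes the unconditional lower bounds stated in Propositions \ref{prop1} and \ref{prop2}. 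Existence of a Dirichlet density follows because the contributions depend on residue classes of $p$ and of $\delta_p$ modulo small powers of $p$, which are controlled by Chebotarev-type reasoning within ray class fields of $K$.

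The main obstacle is converting these lower bounds into exact equalities. The Dwork-Christol condition is both necessary and sufficient, so a sharp formula demands a quantitative asymptotic count of split primes $p$ for which $\delta_p$ displays a specified pattern of initial $p$-adic digits. Asserting that these digits equidistribute as $p$ ranges over split primes is a $p$-adic analogue of the folklore normality problem for digit expansions of classical irrationals such as $\sqrt{2}$ in base $10$, and no unconditional argument for it is known. Under the normality hypothesis, however, each digit configuration contributes its expected weight, which is exactly what is needed to upgrade the sufficient conditions underlying Propositions \ref{prop1} and \ref{prop2} to necessary-and-sufficient ones, yielding the claimed exact density formulas.
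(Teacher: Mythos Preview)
Your argument for the upper bound of $1/2$ via inert primes is essentially the paper's Proposition~\ref{p:split}, and is fine.

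The remainder of the proposal, however, misidentifies both the mechanism for the unconditional lower bound and the role of normality. You propose to extract a condition on the \emph{initial} $p$-adic digits of $\delta_p$ and then aggregate over split primes whose $\delta_p$ meets it. This cannot yield boundedness: the Dwork--Christol / carry-counting criterion (Corollary~\ref{c:FGM}) requires that the digit inequality $\max((a+b\sqrt D-1)[j],(a-b\sqrt D-1)[j]) \geq (c-1)[j]$ hold for \emph{all} $j$, not just finitely many, and since $a\pm b\sqrt D$ are irrational their digit sequences are aperiodic and cannot be certified from any finite initial segment. The paper's key observation is that the \emph{trace} $2a-2$ is rational, so by Lemma~\ref{l:digitsum} the two numerator digits at position $j$ sum to at least $(2a-2)[j]-1$; hence one of them is always at least $\tfrac12((2a-1)[j]-1)$. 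This converts the problem into a purely rational comparison between the periodic digits of $c-1$ and $2a-1$, which is why the sets $B_K(a;c)$ in Propositions~\ref{prop1} and~\ref{prop2} depend only on $a$, $c$, and the splitting condition, with no reference to $\delta_p$ or ray class fields.

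Your account of normality is also off target. You frame it as equidistribution of initial digits of $\delta_p$ as $p$ varies over split primes. In the paper, normality (more precisely Conjecture~\ref{conj}) is invoked for a \emph{single fixed} prime $p$ not in $B_K(a;c)$: one must show unboundedness at that $p$, which means exhibiting infinitely many digit positions $j$ where $(c-1)[j]$ exceeds both numerator digits. The trace relation gives that the numerator digits average to roughly $\tfrac12(2a-1)[j]$, but one could conceivably always be large and the other small. Normality of the single $p$-adic expansion of $a+b\sqrt D$ guarantees an infinite set of positions along the relevant arithmetic progression where $(a+b\sqrt D-1)[j]$ sits near $\tfrac12(2a-1)[j]$, forcing both numerator digits below $(c-1)[j]$ at those positions.
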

In the rational case of Theorem \ref{t:main1}, our arguments rely on the periodic $p$-adic expansions of rational numbers. In the quadratic irrational case of Theorem \ref{t:main2}, the expansions are no longer periodic, and so normality serves as a convenient replacement for this property. Actually, to establish our conjectural density formulas in Section \ref{s:quadratic}, we only need to assume a result about $p$-adic expansions of quadratic irrationalities, Conjecture \ref{conj} below, that is implied by $p$-adic normality of digits. We do not know if Conjecture \ref{conj} is in fact equivalent to $p$-adic normality. 

The density in Theorem \ref{t:main2} is bounded above by $1/2$ for the simple reason that a series such as $f(z)$ can only be $p$-adically bounded for primes $p$ that split in $\QQ(\sqrt{D}$), cf. Proposition \ref{p:split}. The reason why there are two density formulas, cf. Propositions \ref{prop1} and \ref{prop2}, is because the formulas are different in the cases where $2a$ is integral or not. Since the trace of $a+b\sqrt{D}$ plays an important role in the proofs, whether the trace $2a$ has a truncating or periodic $p$-adic expansion affects the argument and the formulas.

Section \ref{s:schwarz} is inspired by the Schwarz list for ${}_2F_1$, which essentially classifies series with density of bounded primes equal to one. For the case of series such as $f(z)$ above, a natural quadratic irrational analogue is to classify series where the density of bounded primes is $1/2$. We do not resolve this question, but we discuss some computations towards it, and also establish some upper-bounds that indicate that it should be reasonably rare for a series such as $f(z)$ to have a density of bounded primes equal to $1/2$. For example, in all of our examples, we only ever observed a density of $1/2$ when the field $K=\QQ(\sqrt{D})$ is imaginary quadratic. See Table \ref{tab:dvaryk} for some examples with $a$ and $c$ of small height. This rarity of large densities is in-line with the classical Schwarz list being rather sparse among all rational hypergeometric series.

An interesting open question is how the irrational case extends to higher values of \(n\) for a general series \(_nF_{n-1}\) with irrational but algebraic hypergeometric parameters. For example, one could consider series \(_3F_2\) with a pair of quadratic conjugates in the numerator, along with a third rational numerator parameter, and a separate pair of conjugate quadratic irrationalities in the denominator. This will still have rational coefficients, and presumably also a density of bounded primes. Or, one could consider $_3F_2$ with three cyclic Galois conjugates of degree three as the numerator parameters --- again, is there a density of bounded primes for such series? While we find these questions interesting, the array of conceivable patterns only grows more vast as \(n\) increases, so it is difficult to imagine what sort of general result might exist. Nevertheless, the density results in the quadratic irrational case of ${}_2F_1$ are surprisingly clean, and they require a nice application of $p$-adic normality of algebraic numbers --- or its a priori weaker form of Conjecture \ref{conj} ---  that could provide an impetus for work in the area of $p$-adic metric number theory. 

\section{Background}
The rising factorials are defined for integers $k\geq 0$ as $(x)_0 = 1$ and otherwise
\[
  (x)_k \df x(x+1)\cdots (x+k-1).
\]
Let $\alpha = (\alpha_1,\ldots ,\alpha_{m})$ and $\beta = (\beta_1,\ldots, \beta_m)$ denote complex numbers, and assume that $\beta_j \not \in \ZZ_{\leq 0}$ for all $j=1,\ldots, n$. The corresponding \emph{generalized hypergeometric series}, or just hypergeometric series, ${}_mF_n(\alpha;\beta;z)$ is defined by the infinite series
\[
{}_mF_n(\alpha;\beta;z) = \sum_{k\geq 0} \frac{(\alpha_1)_k\cdots (\alpha_m)_k}{(\beta_1)_k\cdots (\beta_n)_k} \frac{z^k}{k!}.
\]
These series satisfy hypergeometric differential equations, and the equations are regular singular exactly when $m=n+1$. The case of ${}_2F_1$ has a long history dating back at least to work of Gauss and Riemann. The finite monodromy groups of these equations were classified by Schwarz, and the case of ${}_mF_n$ was treated more recently by Beukers-Heckmann\cite{beukers}.

Let $p$ be a rational prime, and let $\nu_p$ denote the corresponding $p$-adic valuation normalized so that $\nu_p(p)=1$.
\begin{dfn}
  A hypergeometric series for parameters $(\alpha,\beta)$ is said to be \emph{rational} if ${}_mF_n(\alpha;\beta;z) \in \pseries{\QQ}{z}$.
\end{dfn}

\begin{rmk}
  Obviously, hypergeometric parameters such that $\alpha_i\in \QQ$ and $\beta_j \in \QQ$ for every $i$ and $j$ yield rational series, but there exist other examples such as $\alpha = (a+b\sqrt{2},a-b\sqrt{2})$ and $\beta = (c)$ for any rational numbers $a,b,c$ with $c \not \in \ZZ_{\leq 0}$.
\end{rmk}
\begin{dfn}
  A prime $p$ is said to be \emph{bounded} for a rational hypergeometric series if the $p$-adic valuations of the coefficients of ${}_mF_n(\alpha;\beta;z)$ are bounded below. Equivalently, there exists $N \in \ZZ$ such that
  \[
    p^N{}_mF_n(\alpha;\beta;z) \in \pseries{\ZZ_p}{z}.
  \]
  Otherwise $p$ is said to be \emph{unbounded} for the given series.
\end{dfn}

In \cite{FGM} it was shown that if $\alpha,\beta,\gamma \in \QQ$ with $\gamma \not \in \ZZ_{\leq 0}$, then the set of primes such that the classical hypergeometric series ${}_2F_1(\alpha,\beta;\gamma;z)$ has bounded denominators has a natural density. In Theorem 4 of \cite{fetal}, precision was added to this result in the form of an explicitly computable formula for this natural density:
\begin{thm}\label{t:fetal}
  Let $a,b$ and $c$ denote rational numbers with $0 < a,b,c, < 1$, where $c \neq a,b$. Let $N$ denote the least common multiple of the denominators of $a-1$, $b-1$ and $c-1$, and define
  \[
  B(a,b;c) \df \{u \in (\ZZ/N\ZZ)^\times \mid \textrm{ for all } j \in \ZZ,~ \{-u^jc\} \leq \max(\{-u^ja\},\{-u^jb\})\}.
\]
Then for all primes $p > N$, the series ${}_2F_1(a,b;c;z)$ is $p$-adically bounded if and only if $p$ is congruent to an element of $B(a,b;c)$ mod $N$. Therefore, the set of bounded primes for ${}_2F_1(a,b;c;z)$ has a density equal to $\tfrac{\abs{B(a,b;c)}}{\phi(N)}$.
\end{thm}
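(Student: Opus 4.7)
The plan is to apply a digital criterion for $p$-adic boundedness of generalized hypergeometric series, in the style of Dwork and Christol, in order to reduce boundedness to a pointwise inequality on fractional parts that depends only on $p \bmod N$, and then invoke Dirichlet's theorem on primes in arithmetic progressions to extract the density.

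First I would write $A_k = (a)_k(b)_k/((c)_k \, k!)$ and analyze
\[
\nu_p(A_k) = \nu_p((a)_k) + \nu_p((b)_k) - \nu_p((c)_k) - \nu_p(k!)
\]
using a Kummer-style carry formula that expresses $\nu_p((\alpha)_k)$ for rational $\alpha$ with denominator coprime to $p$ in terms of the base-$p$ digits of $k$ and the $p$-adic digits of $-\alpha$ viewed as an element of $\ZZ_p$. The elementary observation that $p^j \equiv u^j \pmod{N}$ for $u \equiv p \pmod{N}$ yields $\{-p^j\alpha\} = \{-u^j\alpha\}$ for $\alpha \in \{a,b,c\}$, so the arithmetic data driving the valuation only depends on $p$ modulo $N$, where $N$ is the common denominator of $a,b,c$.

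Next I would establish a Christol-type criterion tailored to this case: ${}_2F_1(a,b;c;z)$ is $p$-adically bounded if and only if for every $j \geq 0$ the denominator tails $\{-p^jc\}$ and $\{-p^j \cdot 1\}=0$ can be paired with the numerator tails $\{-p^ja\}$ and $\{-p^jb\}$ so that each matched denominator tail does not exceed the numerator tail paired with it. Since $0$ can be paired freely, this collapses to $\{-u^jc\} \leq \max(\{-u^ja\}, \{-u^jb\})$ for every $j$, which is exactly the condition cutting out $B(a,b;c)$. The ``only if'' direction is proved by exhibiting, whenever the inequality fails at some index $j_0$, an explicit sequence $k_m$ (constructed from $p^{j_0+1}-1$ and its shifts) along which $\nu_p(A_{k_m}) \to -\infty$; the ``if'' direction is proved by showing that the pointwise inequality produces a uniform lower bound on $\nu_p(A_k)$, leveraging the purely periodic structure of the digit expansions of rationals with denominator coprime to $p$. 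Combining the two directions, the bounded primes with $p > N$ are exactly those whose residue class modulo $N$ lies in $B(a,b;c)$, so Dirichlet's theorem gives density $|B(a,b;c)|/\phi(N)$.

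The main obstacle I anticipate is the step establishing the digital criterion, namely the reduction of an infinite family of inequalities indexed by $k$ to the finite, periodic condition indexed by $j$. The forward direction requires constructing a destructive sequence of $k$'s from a single bad digit index in a way that quantitatively controls the drop in valuation at each step, and the converse requires a careful uniform lower bound that exploits the periodicity of the digits together with the maximum structure on the numerator side. The assumption $c \neq a,b$ presumably enters to prevent trivial cancellations in the pairing argument.
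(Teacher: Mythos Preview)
The paper does not actually prove this theorem; its entire proof reads ``This is Theorem 4 of \cite{fetal}.'' Your proposal is a correct outline of how that result is established in the cited reference, and it is essentially the same argument the present paper later uses to prove the generalization to ${}_nF_{n-1}$ in Theorem~\ref{t:rational}: express $\nu_p(A_k)$ via Kummer's carry formula (Corollary~\ref{c:FGM}), identify the $p$-adic digits of the shifted parameters with the fractional parts $\{-p^j\alpha\}$ through Lemma~\ref{l:digitformula}, use periodicity to reduce to a finite check modulo $N$, and when the inequality fails at some index $j$ build an explicit sequence $k_A$ along which $\nu_p(A_{k_A})\to -\infty$. Your anticipated obstacle---turning a single bad digit into unbounded denominators---is handled exactly as you suggest, and the role of $c\neq a,b$ is to keep $p$ ``good'' in the sense of the paper (so that the digit comparison is strict for large $p$). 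In short, your sketch is sound and matches the method; for the purposes of this paper, however, the proof is simply a citation.
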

\begin{proof}
    This is Theorem 4 of \cite{fetal}.
\end{proof}

Our goals below are twofold:
\begin{enumerate}
\item generalize Theorem \ref{t:fetal} to the case of general $m$ and $n$;
\item generalize Theorem \ref{t:fetal} when $m=2$, $n=1$, to the case of certain rational parameters living in quadratic number fields.
\end{enumerate}
In the case of quadratic fields, our density result is conditional on a seemingly plausible conjecture about the normality of the $p$-adic expansions of irrational algebraic numbers at split primes. Such a conjecture is a substitute for the lack of periodicity of the $p$-adic expansions of the hypergeometric parameters in these cases.

To study the $p$-adic boundedness of hypergeometric series, following \cite{FGM} we introduce:
\begin{dfn}
  \label{d:carries}
  Let $c_p$ denote the function
  \[c_p \colon \ZZ_p\times \ZZ_{\geq 0} \to \ZZ_{\geq 0} \cup \{\infty\}\] where $c_p(x,k)$ computes the number of carries needed to evaluate the $p$-adic sum $x+k$ using the usual add-and-carry method.
\end{dfn}
In \cite{FGM} it was observed that this function is locally constant in $x$, and using a classical result of Kummer, one in fact has the following:
\begin{thm}[Kummer]
  \label{t:kummer}
One has $\nu_p\binom{x+k}{k} = c_p(x,k)$.
\end{thm}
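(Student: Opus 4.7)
The plan is to reduce to the classical Kummer theorem for non-negative integers via a $p$-adic approximation argument. In the classical case, Legendre's formula $\nu_p(N!) = (N-s_p(N))/(p-1)$ (where $s_p$ denotes the base-$p$ digit sum) combined with the identity $s_p(m)+s_p(n)-s_p(m+n) = (p-1)\,c_p(m,n)$ yields
\[
\nu_p\binom{m+n}{n} = \nu_p((m+n)!) - \nu_p(m!) - \nu_p(n!) = c_p(m,n)
\]
for $m,n \in \ZZ_{\geq 0}$.

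To handle general $x \in \ZZ_p$, fix $k \in \ZZ_{\geq 0}$ and view
\[
\binom{X+k}{k} = \frac{(X+1)(X+2)\cdots(X+k)}{k!}
\]
as a polynomial $P(X) \in \QQ[X]$. For any integer $N \geq 1$, let $x_N \in \{0,1,\ldots,p^N-1\}$ be the unique non-negative integer with $x \equiv x_N \pmod{p^N}$. Since the numerator difference $(x+1)\cdots(x+k) - (x_N+1)\cdots(x_N+k)$ is divisible by $p^N$, one has $\nu_p\bigl(P(x) - P(x_N)\bigr) \geq N - \nu_p(k!)$. On the combinatorial side, $c_p(x,k)$ depends only on the digits of $x$ at positions up to roughly $\lfloor\log_p k\rfloor + c_p(x,k)$, since past the highest nonzero digit of $k$ the add-and-carry procedure merely propagates any existing carry through the digits of $x$; hence $c_p(x_N,k) = c_p(x,k)$ once $N$ exceeds this bound.

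Choosing $N$ larger than both $\nu_p(k!) + c_p(x,k)$ and the position bound just described, the classical case applied to $x_N + k$ gives $\nu_p\binom{x_N+k}{k} = c_p(x_N,k)$; the ultrametric inequality then transfers this equality to $x$, yielding $\nu_p\binom{x+k}{k} = c_p(x,k)$. The main obstacle, and the only genuine subtlety, is the degenerate case $c_p(x,k) = \infty$: a direct check on $p$-adic expansions shows this occurs exactly when $x \in \{-1,-2,\ldots,-k\} \subset \ZZ_p$, for which $P(x) = 0$ and so $\nu_p P(x) = \infty$ matches.
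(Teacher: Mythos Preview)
Your argument is correct. The reduction to the classical Kummer theorem via truncation $x_N$ of the $p$-adic expansion, together with the ultrametric comparison $\nu_p(P(x)-P(x_N)) \geq N - \nu_p(k!)$, is a clean way to transfer the integer case to arbitrary $x \in \ZZ_p$; your handling of the degenerate case $c_p(x,k)=\infty$ is also right, since infinite carry propagation past the top digit of $k$ forces all higher digits of $x$ to equal $p-1$, hence $x \in \{-1,\ldots,-k\}$ and $P(x)=0$.

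By way of comparison: the paper does not actually prove this result here but simply cites Theorem~2.5 of \cite{FGM}. So you have supplied a self-contained proof where the paper defers to the literature. The approximation-by-integers argument you give is essentially the standard one (and presumably close in spirit to what appears in \cite{FGM}), so there is no real divergence in method to discuss---you have just filled in what the paper outsourced.
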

\begin{proof}
See Theorem 2.5 of \cite{FGM}.
\end{proof}

The following result is a slight generalization of Theorem 3.4 of \cite{FGM}, to the case of general $m$ and $n$.
\begin{cor}
  \label{c:FGM}
  Let $\alpha$ and $\beta$ denote hypergeometric parameters contained in $\ZZ_p$, such that no $\beta$ parameter is a negative integer. Then if we write ${}_m F_n(\alpha;\beta;z) = \sum_{k\geq 0}A_kz^k$ for $A_k \in \QQ_p$, we have
  \[
  \nu_p(A_k) = \sum_{i=1}^mc_p(\alpha_i-1,k)-\sum_{j=1}^n c_p(\beta_j-1,k) + (m-n-1)\nu_p(k!).
  \]
\end{cor}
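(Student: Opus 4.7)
The plan is to compute the valuation directly from the formula $A_k = \prod_i (\alpha_i)_k / \big(\prod_j (\beta_j)_k \cdot k!\big)$ by reducing each rising-factorial contribution to a binomial coefficient and then invoking Kummer's theorem (Theorem \ref{t:kummer}).

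First I would record the elementary identity
\[
(x)_k = x(x+1)\cdots(x+k-1) = k!\binom{x+k-1}{k},
\]
valid for $x \in \ZZ_p$ and $k \geq 0$ (the binomial coefficient being defined via the product expansion even when $x$ is not a non-negative integer). Combined with Theorem \ref{t:kummer} applied to the pair $(x-1, k)$, this gives
\[
\nu_p\bigl((x)_k\bigr) = \nu_p(k!) + c_p(x-1, k).
\]
The hypothesis that no $\beta_j$ is a negative integer (and the convention that $c_p$ may take the value $+\infty$) ensures both sides are well-defined, agreeing in the degenerate case $(\beta_j)_k = 0$ since a zero factor in the denominator corresponds to infinitely many carries in the expansion of $\beta_j - 1 + k$.

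Second, I would take $\nu_p$ of the defining expression
\[
A_k = \frac{\prod_{i=1}^m (\alpha_i)_k}{k!\,\prod_{j=1}^n (\beta_j)_k}
\]
and substitute the identity above for each of the $m+n$ rising factorials. The contributions from $\nu_p(k!)$ collect into a net coefficient of $m - n - 1$ (one copy per $\alpha_i$, minus one copy per $\beta_j$, minus one from the explicit $k!$ in the denominator), yielding exactly
\[
\nu_p(A_k) = \sum_{i=1}^m c_p(\alpha_i-1, k) - \sum_{j=1}^n c_p(\beta_j-1, k) + (m-n-1)\nu_p(k!).
\]

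There is no serious obstacle here: the argument is a direct bookkeeping exercise once Kummer's theorem is in hand, and it is essentially identical to the proof of Theorem 3.4 of \cite{FGM} except that the balanced case $m = n+1$ (which makes the $\nu_p(k!)$ term vanish) is no longer imposed. The only point that requires a brief comment is the $\ZZ_p$-integrality hypothesis on the parameters, which is precisely what is needed for $c_p(\alpha_i - 1, k)$ and $c_p(\beta_j - 1, k)$ to be defined as in Definition \ref{d:carries}.
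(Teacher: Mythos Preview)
Your proof is correct and follows essentially the same approach as the paper: both rewrite each rising factorial $(x)_k$ as $k!\binom{x+k-1}{k}$, apply Kummer's theorem (Theorem \ref{t:kummer}) to each binomial coefficient, and collect the resulting $\nu_p(k!)$ terms into the $(m-n-1)\nu_p(k!)$ contribution. The paper simply compresses this into the single rewritten formula $A_k = \prod_i\binom{\alpha_i-1+k}{k}\big/\prod_j\binom{\beta_j-1+k}{k}\cdot (k!)^{m-n-1}$ before invoking Kummer, whereas you compute $\nu_p((x)_k)$ first and then substitute.
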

\begin{proof}
  By a direct computation one sees that
  \[
  A_k=\frac{\prod_{i=1}^m\binom{\alpha_i-1+k}{k}}{\prod_{j=1}^n\binom{\beta_j-1+k}{k}}\cdot (k!)^{m-n-1}
\]
Therefore, this Corollary follows immediately from Theorem \ref{t:kummer}.
\end{proof}

\begin{rmk}
  \label{r:nontrivial}
Since $\nu_p(k!) = O(k)$, while $c_p(x,k) = O(\log k)$ away from the poles $x$ of $c_p(x,k)$, one can use Corollary \ref{c:FGM} to prove that for rational hypergeometric parameters, all primes are bounded if $m > n+1$, whereas no primes are bounded if $m < n+1$. Thus, as far as densities go, for rational parameters the only interesting case is when $m=n+1$ and the corresponding hypergeometric differential equation is regular singular.
\end{rmk}
\section{Rational parameters}
\label{s:rational}

This section considers the case of rational parameters $\alpha,\beta$. In light of Remark \ref{r:nontrivial}, we restrict also to the case of ${}_nF_{n-1}$. We begin by recalling some facts on the $p$-adic expansions of rational numbers, which are always eventually periodic. In fact, if $a$ is a rational number satisfying $-1 < a < 0$ and $a \in \ZZ_p^\times$, then the $p$-adic expansion of $a$ is purely periodic, of period equal to the multiplicative order of $p$ modulo the denominator of $a$. Since the density of bounded primes for a given set of hypergeometric parameters only depends on the rational parameters mod $\ZZ$, we will normalize our parameters to lie in the interval $(0,1)$. We state the following lemma on the digits of our hypergeometric parameters in a way which will make the application of Corollary \ref{c:FGM} more straightforward.

\begin{lem}
  \label{l:digitformula}
  Let $a = \tfrac uv$ denote a rational number with $\gcd(u,v) =1$ satisfying $0 < a < 1$, and let $p$ denote a prime such that $a-1$ is a $p$-adic unit. Let $M$ denote the multiplicative order of $p \pmod{v}$, and let the periodic expansion of $a-1$ be denoted as:
  \[
  a-1 = \overline{\alpha_0\alpha_1\cdots \alpha_{M-1}}.
\]
Then for each $j$ with $0\leq j \leq M-1$ we have
\[
  \alpha_j = \floor{\{-p^{M-1-j}a\}p}.
\]
In particular, if $p > v$, then each $\alpha_j$ is nonzero.
\end{lem}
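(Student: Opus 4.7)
The plan is to reformulate the claim in terms of real floors of rational numbers and verify it by a direct division argument. Set $b \df 1 - a = (v-u)/v \in (0,1)$, so $a - 1 = -b$ has the same $p$-adic digits $\alpha_j$. Because the expansion of $-b$ is purely periodic of period $M$, we can express
\[
b = \frac{A}{p^M - 1}, \qquad A \df \sum_{j=0}^{M-1}\alpha_j p^j,
\]
where $A$ is the integer whose base-$p$ digits (low to high) are $\alpha_0,\ldots,\alpha_{M-1}$. The hypothesis $a > 0$ gives the strict inequality $A < p^M - 1$, which will be essential.

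Next, I would apply the elementary identity $\floor{p\{x\}} = \floor{px} - p\floor{x}$ (valid for any real $x$ because $p\floor{x} \in \ZZ$) to $x = p^{M-1-j}b$. Combined with $\{-p^{M-1-j}a\} = \{p^{M-1-j}b\}$, which holds because $p^{M-1-j}$ is an integer, the claim reduces to
\[
\alpha_j = \floor{p^{M-j}b} - p\floor{p^{M-1-j}b}.
\]
The core step is therefore computing $\floor{p^i b}$ for $0 \leq i \leq M$. Splitting the expansion $p^i A = \sum_k \alpha_k p^{k+i}$ into the parts of exponent $< M$ (which contribute to the remainder) and $\geq M$ (which contribute to the quotient via $p^M = (p^M - 1) + 1$), a direct calculation yields
\[
\floor{p^i b} = \sum_{k=M-i}^{M-1}\alpha_k p^{k-M+i},
\]
with the empty sum for $i = 0$ interpreted as $0$. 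The strict inequality $A < p^M - 1$ is precisely what guarantees that the remainder of this division lies in $[0,\,p^M - 1)$, so no boundary correction is needed. Specializing to $i = M-1-j$ and $i = M-j$ and subtracting produces a telescoping cancellation whose sole surviving term is $\alpha_j$.

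For the last sentence, $\{-p^{M-1-j}a\}$ is the fractional part of a rational with denominator dividing $v$, and is nonzero since $\gcd(u,v) = \gcd(p,v) = 1$ and $v \geq 2$. Therefore $\{-p^{M-1-j}a\} \geq 1/v$, so when $p > v$ we get $p\{-p^{M-1-j}a\} > 1$, forcing $\alpha_j \geq 1$. The main obstacle I anticipate is the careful bookkeeping in the explicit evaluation of $\floor{p^i b}$; once one verifies that the remainder in the division of $p^i A$ by $p^M - 1$ is strictly less than $p^M - 1$ (the step where $a > 0$ enters essentially, to rule out the degenerate case $A = p^M - 1$), the rest is a clean telescoping.
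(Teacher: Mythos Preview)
Your argument is correct. The identity $b=A/(p^M-1)$, the reduction $\alpha_j=\lfloor p^{M-j}b\rfloor-p\lfloor p^{M-1-j}b\rfloor$, and the explicit evaluation $\lfloor p^ib\rfloor=\sum_{k=M-i}^{M-1}\alpha_kp^{k-M+i}$ all check out; the key point that the remainder $p^iR_i+Q_i$ is strictly below $p^M-1$ is exactly where $A<p^M-1$ (equivalently $a>0$) is used, since equality would force every digit to be $p-1$. The telescoping then isolates $\alpha_j$ cleanly, and your bound $\{-p^{M-1-j}a\}\geq 1/v$ gives the nonvanishing when $p>v$.

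By way of comparison: the paper does not prove the digit formula at all here but simply cites Lemma~2.4 of \cite{FGM}, and then supplies only the short argument for the final sentence (nonvanishing of the digits). That last argument is essentially identical to yours: both observe that $\{-p^{M-1-j}a\}$ is a reduced fraction $k/v$ with $1\le k\le v-1$, hence at least $1/v$, so multiplying by $p>v$ and taking the floor yields a positive integer. What you have added is a fully self-contained derivation of the digit formula itself via the explicit quotient computation $\lfloor p^iA/(p^M-1)\rfloor$, which the paper outsources. Your approach has the advantage of being elementary and independent of the cited reference; the paper's approach is shorter on the page but relies on prior work.
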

\begin{proof}
Most of this is Lemma 2.4 of \cite{FGM}. All that remains to observe is the final claim that each $\alpha_j$ is nonzero. For this, begin with the observation that
\[
\{-p^{M-1-j}a\} -\tfrac 1p < \tfrac{\alpha_j}{p} \leq \{-p^{M-1-j}a\}.
\]
Since $p$ is coprime to $a$, then $\{-p^{M-1-j}a\}$ is a rational number of the form $k/v$ for some integer $1\leq k \leq v-1$. Hence,
\[
0<\frac{1}{v}-\frac 1p < \frac{\alpha_j}{p}
\]
as claimed.
\end{proof}

\begin{dfn}
A prime $p$ is \emph{good} for hypergeometric parameters $\alpha$, $\beta$ provided that all of $\alpha_i-1$ and $\beta_i-1$ are $p$-adic units, and such that $p$ does not divide any of the differences $\alpha_i-\beta_j$.
\end{dfn}

Note that as long as no $\alpha_i$ is equal to a $\beta_j$, a harmless hypothesis that we enforce below, then the set of good primes for a set of hypergeometric parameters contains all primes that are large enough. Hence, as far as densities of bounded primes go, it is harmless to restrict to considering good primes only.

Let $p$ denote a good prime for some rational hypergeometric parameters, and let $M$ denote the order of $p$ modulo the least common multiple of the denominators of the hypergeometric parameters. Then by Lemma \ref{l:digitformula}, the $p$-adic periods of each parameter all divide $M$.

\begin{dfn}
  \label{d:numeratormajorized}
Let $\alpha = (\alpha_i)_{i=1}^n$ and $\beta = (\beta_i)_{i=1}^{n-1}$ denote rational hypergeometric parameters, and let $N$ denote the least common multiple of the denominators of the parameters. Fix an invertible congruence class $u\pmod{N}$. Then a set of hypergeometric parameters is said to be \emph{numerator majorized} for this congruence class provided that for all integers $j\geq 0$, there exists a permutation $\sigma_j \in S_n$ such that
  \[
  \{-u^j\beta_i\} \leq \{-u^j\alpha_{\sigma_j(i)}\}
\]
for all $i=1,\ldots, n-1$.
\end{dfn}
Definition \ref{d:numeratormajorized} is reminiscient of the interlacing of roots of unity condition that appears in the study of finite monodromy in \cite{Landau} and \cite{beukers}. For a given set of parameters, it is a finite computation to determine which congruence classes are numerator majorized or not.
\begin{ex}
  \label{ex1}
  Consider the hypergeometric series
  \[
  {}_3F_2(\tfrac 15, \tfrac 25, \tfrac 35; \tfrac 45, \tfrac 12;z).
\]
In this case $N=10$, and so we only need to test the numerator majorization condition for $j=0,1,2,3$, since $\phi(10) = 4$, and thus $p^4 \equiv 1 \pmod{10}$.
\begin{table}
  \renewcommand{\arraystretch}{1.5}
\begin{tabular}{c||c|c|c||c|c}
  $p\pmod{10}$ & $\tfrac 15$ & $\tfrac 25$ & $\tfrac 35$ & $\tfrac 45$ & $\tfrac 12$\\
  \hline
  $1$ &$\left[\frac{4}{5}, \frac{4}{5}, \frac{4}{5}, \frac{4}{5}\right]$&$\left[\frac{3}{5}, \frac{3}{5}, \frac{3}{5}, \frac{3}{5}\right]$&$\left[\frac{2}{5}, \frac{2}{5}, \frac{2}{5}, \frac{2}{5}\right]$&$\left[\frac{1}{5}, \frac{1}{5}, \frac{1}{5}, \frac{1}{5}\right]$&$\left[\frac{1}{2}, \frac{1}{2}, \frac{1}{2}, \frac{1}{2}\right]$\\
  $3$ &$\left[\frac{4}{5}, \frac{2}{5}, \frac{1}{5}, \frac{3}{5}\right]$&$\left[\frac{3}{5}, \frac{4}{5}, \frac{2}{5}, \frac{1}{5}\right]$&$\left[\frac{2}{5}, \frac{1}{5}, \frac{3}{5}, \frac{4}{5}\right]$&$\left[\frac{1}{5}, \frac{3}{5}, \frac{4}{5}, \frac{2}{5}\right]$&$\left[\frac{1}{2}, \frac{1}{2}, \frac{1}{2}, \frac{1}{2}\right]$\\
  $7$ &$\left[\frac{4}{5}, \frac{3}{5}, \frac{1}{5}, \frac{2}{5}\right]$&$\left[\frac{3}{5}, \frac{1}{5}, \frac{2}{5}, \frac{4}{5}\right]$&$\left[\frac{2}{5}, \frac{4}{5}, \frac{3}{5}, \frac{1}{5}\right]$&$\left[\frac{1}{5}, \frac{2}{5}, \frac{4}{5}, \frac{3}{5}\right]$&$\left[\frac{1}{2}, \frac{1}{2}, \frac{1}{2}, \frac{1}{2}\right]$\\
  $9$ &$\left[\frac{4}{5}, \frac{1}{5}, \frac{4}{5}, \frac{1}{5}\right]$&$\left[\frac{3}{5}, \frac{2}{5}, \frac{3}{5}, \frac{2}{5}\right]$&$\left[\frac{2}{5}, \frac{3}{5}, \frac{2}{5}, \frac{3}{5}\right]$&$\left[\frac{1}{5}, \frac{4}{5}, \frac{1}{5}, \frac{4}{5}\right]$&$\left[\frac{1}{2}, \frac{1}{2}, \frac{1}{2}, \frac{1}{2}\right]$
\end{tabular}
\caption{The values $\{-p^j\alpha_i\}$ and $\{-p^j\beta_i\}$ for the hypergeometric parameters $\alpha = (\tfrac 15,\tfrac 25, \tfrac 35)$ and $\beta = (\tfrac 45, \tfrac 12)$.}
\label{t1}
\end{table}
Table \ref{t1} shows that the class $1 \pmod{10}$ is numerator majorized for these parameters. The class $3\pmod{10}$ fails the numerator majorization condition for $j=2$, as the third entry of the list for the denominator parameter $\tfrac 45$ in that case is $\tfrac 45$, which is maximal and thus can't be numerator majorized. Similarly, $7 \pmod{10}$ and $9\pmod{10}$ fail the numerator majorization condition in this example.
\end{ex}

\begin{lem}
  \label{l:equivalent}
  Let $\alpha = (\alpha_i)_{i=1}^n$ and $\beta = (\beta_i)_{i=1}^{n-1}$ denote rational hypergeometric parameters taken from the interval $(0,1)$, and let $N$ denote the least common multiple of the denominators of the parameters. Then these parameters are numerator majorized for a a congruence class $u\pmod{N}$ if and only if for all large enough primes $p\equiv u \pmod{N}$, if we write the $p$-adic digits of $\alpha_i-1$ as $\alpha_{i,j}(p)$ and similarly for $\beta_i-1$ and $\beta_{i,j}(p)$, then for all integers $j\geq 0$ there exists a permutation $\sigma_j \in S_n$ such that
  \[
  \beta_{i,j}(p) \leq \alpha_{\sigma_j(i),j}(p)
\]
for all $i=1,\ldots, n-1$.
\end{lem}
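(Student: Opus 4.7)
The plan is to translate both conditions into comparable statements about fractional parts of the form $\{-u^k\gamma\}$, and then invoke an elementary bound that forces the comparison of digits to coincide with the comparison of fractional parts once $p$ is large enough.

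First, I would set up notation. Let $M$ denote the multiplicative order of $u$ in $(\ZZ/N\ZZ)^\times$; since every hypergeometric parameter has denominator dividing $N$, the $p$-adic period of each $\alpha_i-1$ and $\beta_i-1$ divides $M$ for any prime $p\equiv u\pmod N$. Moreover, for such a prime and any $k\geq 0$, the congruence $p^k\equiv u^k\pmod N$ and the fact that the parameters have denominators dividing $N$ give
\[
\{-p^k\alpha_i\}=\{-u^k\alpha_i\},\qquad \{-p^k\beta_i\}=\{-u^k\beta_i\}.
\]
Applying Lemma \ref{l:digitformula} uniformly with this common period $M$, the digits satisfy
\[
\alpha_{i,j}(p)=\lfloor p\{-u^{M-1-j}\alpha_i\}\rfloor,\qquad \beta_{i,j}(p)=\lfloor p\{-u^{M-1-j}\beta_i\}\rfloor,
\]
for $0\leq j\leq M-1$, and both expansions extend periodically.

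Next, I would establish the following elementary comparison: if $x,y\in[0,1)$ are rationals whose denominators divide $N$, and if $p>N$, then $\lfloor px\rfloor\leq \lfloor py\rfloor$ if and only if $x\leq y$. One direction is immediate. For the other, note that if $x>y$, then $x-y\geq 1/N$, so $px-py>1$, which forces $\lfloor px\rfloor\geq \lfloor py\rfloor+1$. Applying this pointwise to each pair $(x,y)=(\{-u^{M-1-j}\beta_i\},\{-u^{M-1-j}\alpha_{\sigma_j(i)}\})$, the digit inequality $\beta_{i,j}(p)\leq \alpha_{\sigma_j(i),j}(p)$ is, for every $p\equiv u\pmod N$ with $p>N$, equivalent to the fractional-part inequality $\{-u^{M-1-j}\beta_i\}\leq\{-u^{M-1-j}\alpha_{\sigma_j(i)}\}$.

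Finally, I would handle the reindexing. The numerator majorization condition quantifies over all $j\geq 0$ via the map $j\mapsto\{-u^j\gamma\}$, which is $M$-periodic in $j$. The digit condition quantifies over all $j\geq 0$ via the map $j\mapsto \{-u^{M-1-j}\gamma\}$, which is also $M$-periodic in $j$. Since $j\mapsto M-1-j$ permutes residues modulo $M$, the two collections of inequalities coincide after relabeling permutations $\sigma_j$. This proves the equivalence for all primes $p>N$ in the congruence class $u\pmod N$, and in particular for all sufficiently large such primes.

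The only real subtlety is the bookkeeping in the last paragraph, namely matching the cyclic reindexing between Lemma \ref{l:digitformula} (where the highest-indexed shift $p^{M-1}$ corresponds to the digit $\alpha_{i,0}$) and Definition \ref{d:numeratormajorized} (where the shift by $u^j$ starts at $j=0$). Once this is tracked, the proof reduces to the separation estimate for rationals of bounded denominator, and requires nothing beyond $p>N$.
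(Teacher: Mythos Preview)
Your proof is correct and follows essentially the same approach as the paper: both translate numerator majorization into inequalities of fractional parts $\{-p^j\gamma\}$, then argue that for $p$ large enough the floor $\lfloor p\{-p^j\gamma\}\rfloor$ (which is the digit by Lemma~\ref{l:digitformula}) respects and reflects these inequalities. Your separation estimate uses the clean threshold $p>N$ (since all fractional parts involved lie in $\tfrac{1}{N}\ZZ$), whereas the paper phrases the threshold as $p>\max_{i,j}(\alpha_i-\beta_j)^{-1}$; your version is arguably tidier, and your explicit handling of the cyclic reindexing $j\mapsto M-1-j$ fills in bookkeeping that the paper leaves implicit.
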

\begin{proof}
  For all primes $p\equiv u \pmod{N}$, being numerator majorized is equivalent to the condition
  \[
  \{-p^j\beta_i\}p \leq \{-p^j\alpha_{\sigma_j(i)}\}p.
\]
If $p > 1/(\alpha_u-\beta_v)$ for all $u,v$, then this inequality is \emph{equivalent} to the inequality obtained by taking floors above:
  \[
  \floor{\{-p^j\beta_i\}p} \leq \floor{\{-p^j\alpha_{\sigma_j(i)}\}p}.
\]
Therefore, the equivalence of these two formulations of numerator majorization follows from Lemma \ref{l:digitformula}.
\end{proof}

\begin{thm}
  \label{t:rational}
Let $\alpha = (\alpha_i)_{i=1}^n$ and $\beta = (\beta_i)_{i=1}^{n-1}$ denote hypergeometric parameters, and let $N$ denote the least common multiple of the denominators of the parameters, and fix an invertible congruence class $u\pmod{N}$. Then the corresponding hypergeometric series ${}_nF_{n-1}(\alpha,\beta,z)$ is $p$-adically bounded for all sufficiently large primes $p\equiv u \pmod{N}$ if and only if the hypergeometric parameters are numerator majorized with respect to the congruence class $p \equiv u\pmod{N}$.
\end{thm}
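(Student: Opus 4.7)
The plan is to reduce $p$-adic boundedness to a statement about carry counts via Corollary~\ref{c:FGM}. Since $m = n$ for ${}_nF_{n-1}$, the factorial term vanishes and
\[
\nu_p(A_k) = \sum_{i=1}^n c_p(\alpha_i - 1, k) - \sum_{j=1}^{n-1} c_p(\beta_j - 1, k),
\]
so the series is $p$-adically bounded iff this quantity is bounded below uniformly in $k$. Fix $p \equiv u \pmod{N}$ sufficiently large that all parameters are $p$-adic units, no $\alpha_i - \beta_j$ is divisible by $p$, and Lemma~\ref{l:equivalent} applies; numerator majorization then becomes the digit-wise statement that at each position $r$ there is a permutation $\sigma_r \in S_n$ with $\beta_{l,r}(p) \leq \alpha_{\sigma_r(l),r}(p)$ for $l = 1, \ldots, n-1$. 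I write $C_r^{(x)} \in \{0,1\}$ for the carry out of position $r$ when forming $x + k$, so $c_p(x,k) = \sum_{r \geq 0} C_r^{(x)}$.

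For the reverse direction I prove the contrapositive. Suppose numerator majorization fails at some position $r_0 \in \{0, \ldots, M-1\}$, where $M$ is the multiplicative order of $u \pmod N$. By Hall's marriage criterion there is a threshold $T$ with strictly more $\beta_{l,r_0}(p)$ exceeding $T$ than $\alpha_{i,r_0}(p)$. Take $k$ with nonzero digits only at positions $r_0 + sM$ for $s = 0, \ldots, R-1$, each equal to $p - T$. By Lemma~\ref{l:digitformula} and the assumption $p > N$, every parameter digit is at most $p - 2$, so any carry generated at position $r_0 + sM$ dies within one step; hence the carry-in at each ``bad'' position $r_0 + sM$ is zero. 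By $M$-periodicity of the parameter digits, each such position contributes a net carry deficit of at least $1$, giving $\nu_p(A_k) \leq -R + O(1)$ and forcing $p$-adic unboundedness as $R \to \infty$.

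For the forward direction, assume numerator majorization and aim to show $\nu_p(A_k) \geq -C$ uniformly in $k$. At each position $r$, pair each denominator $\beta_l$ with the numerator $\alpha_{\sigma_r(l)}$. When the two carry-ins agree, the digit inequality $\beta_{l,r} \leq \alpha_{\sigma_r(l),r}$ plus monotonicity of the carry function in the digit give $C_r^{(\beta_l)} \leq C_r^{(\alpha_{\sigma_r(l)})}$. The sole problematic configuration---carry-in $1$ for $\beta_l$ and $0$ for $\alpha_{\sigma_r(l)}$---can produce a local deficit of at most one at position $r$. I amortize such deficits using the unmatched numerator (the unique index not in the image of $\{1, \ldots, n-1\}$ under $\sigma_r$) together with the $M$-periodicity of the parameter digits. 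Concretely, the carry-state tuple $(C_r^{(\alpha_1)}, \ldots, C_r^{(\beta_{n-1})}) \in \{0,1\}^{2n-1}$ evolves as a finite-state automaton driven by the digits of $k$, with transition rules that are $M$-periodic in $r$; I would exhibit a potential function $\Phi$ on states such that the contribution at each transition plus $\Phi(\text{new}) - \Phi(\text{old})$ is non-negative, so that $\nu_p(A_k) \geq -\mathrm{osc}(\Phi)$ by telescoping. Equivalently, one shows that every cycle in the carry-state automaton has non-negative net reward, which reduces to a finite verification.

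The main obstacle is the forward direction. Because the permutations $\sigma_r$ vary with $r$, there is no global injection from denominator carries to numerator carries, and the proof must carefully amortize carry-in mismatches between matched pairs against the surplus provided by the unmatched numerator slot at each position. Producing the right potential function (or directly verifying the cycle condition on the carry-state automaton) is the key technical step, and exploiting the $M$-periodicity of the parameter digits should make this tractable.
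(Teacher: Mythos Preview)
Your reverse direction is sound and in fact a bit cleaner than the paper's: using Hall's criterion to extract a threshold $T$ with strictly more $\beta$-digits than $\alpha$-digits at or above $T$, and then placing the digit $p-T$ at periodically spaced positions, is exactly the right move. One small wrinkle: your claim that ``the carry-in at each bad position is zero'' is only literally true when the spacing $M$ is at least $2$. When $M=1$ the bad positions are consecutive and carries do feed into the next bad position; however, since the parameter digits are periodic with period $1$, the same parameters carry at every bad position and the net deficit is still at least $1$ per position, so your conclusion $\nu_p(A_k)\le -R+O(1)$ survives. Also, Lemma~\ref{l:digitformula} as stated only gives that digits are nonzero; the bound $\le p-2$ you use follows from the same formula $\alpha_j=\lfloor\{-p^{M-1-j}a\}p\rfloor$ together with $\{-p^{M-1-j}a\}\le (v-1)/v$, which is worth stating.

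The genuine gap is in your forward direction. You correctly isolate the only dangerous configuration (carry-in $1$ for $\beta_l$, carry-in $0$ for $\alpha_{\sigma_r(l)}$) and then propose an amortization via a potential function on carry-states. This is both incomplete and unnecessary. The point you are missing is that for $p$ large enough the digit inequality is \emph{strict}: since no $\alpha_i$ equals any $\beta_j$, the fractional parts $\{-u^j\beta_l\}$ and $\{-u^j\alpha_i\}$ are distinct elements of $\tfrac{1}{N}\ZZ$, hence differ by at least $1/N$, and so for $p>N$ one has $\beta_{l,r}(p)<\alpha_{\sigma_r(l),r}(p)$, i.e.\ $\alpha_{\sigma_r(l),r}(p)\ge \beta_{l,r}(p)+1$. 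Now suppose $\beta_l$ carries out at position $r$. Then $\beta_{l,r}+k_r+C^{(\beta_l)}_{r-1}\ge p$, so $\beta_{l,r}\ge p-k_r-1$, whence $\alpha_{\sigma_r(l),r}\ge p-k_r$ and $\alpha_{\sigma_r(l)}$ carries out at $r$ \emph{regardless of its own carry-in}. Thus $l\mapsto\sigma_r(l)$ injects $\{l:\beta_l\text{ carries at }r\}$ into $\{i:\alpha_i\text{ carries at }r\}$ at every position, giving $N_\alpha(r)\ge N_\beta(r)$ for all $r$ and hence $\nu_p(A_k)\ge 0$. This is the argument the paper has in mind (it is terse but imposes exactly the bound $p>\max_{i,j}(\alpha_i-\beta_j)^{-1}$ for this reason), and it dispenses entirely with the automaton analysis.
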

\begin{proof}
  Assume that $p > \max_{i,j}(\alpha_i-\beta_j)^{-1}$ so that Lemma \ref{l:equivalent} holds for such primes. Assume that the series is numerator majorized with respect to $u \pmod{N}$, so that the $p$-adic digits are numerator majorized by Lemma \ref{l:equivalent}. Now Corollary \ref{c:FGM} shows that the negative contributions for carries from terms $c_p(\beta_j-1,k)$ are compensated for by carries $c_p(\alpha_i-1,k)$. Hence $\nu_p(a_k) \geq 0$ for such primes $p$, so that in fact ${}_nF_{n-1}(\alpha,\beta,z) \in \pseries{\ZZ_p}{z}$. This proves one direction of the Theorem.

  Assume conversely that the parameters are not numerator majorized with respect to a congruence class $u\pmod{N}$. This means there exists a digit index $j$ such that for every $\sigma \in S_n$, there exists a hypergeometric parameter index $i$ such that
  \[
  \beta_{i,j}(p) > \alpha_{\sigma(i),j}(p).
\]
First suppose that there is an index $i$ such that $\beta_{i,j}(p) > \alpha_{k,j}(p)$ for all $k$. For each integer $A \geq 0$, consider the term of the hypergeometric series indexed by
\[k_A \df (p-\beta_{i,j}(p))\sum_{u=0}^Ap^{j+u\phi(N)}.\]
Since $\phi(N)$ is a common period for the $p$-adic expansions of all of the hypergeometric parameters, we see that $c_p(\beta_i-1,k_A) = A+1$, but otherwise by construction $c_p(\beta_u-1,k_A) = 0$ for $u\neq i$ and $c_p(\alpha_v-1,k_A) = 0$ for all $v$. Therefore Corollary \ref{c:FGM} shows that in this case $v_p(a_{k_A}) \geq A+1$, where the $a_{k}$ denote the coefficients of ${}_nF_{n-1}(\alpha,\beta,z)$. This treats the case where one of the $\beta_{i,j}(p)$ digits is maximal among all of the $j$-th coefficients of the given set of hypergeometric parameters.

Otherwise, we can without loss of generality assume that $\alpha_{1,j}(p) \geq \cdots \geq \alpha_{n,j}(p)$ and similarly $\beta_{1,j}(p) \geq \cdots \geq \beta_{n-1,j}(p)$. We have treated in the previous paragraph the case when $\alpha_{1,j}(p) < \beta_{1,j}(p)$, and since numerator majorization fails, there must exist a smallest index $i$ such that $\alpha_{u,j}(p) \geq \beta_{u,j}(p)$ for $1\leq u \leq i-1$ but $\beta_{i,j}(p) > \alpha_{i,j}(p)$. Now defining $k_A$ as above, this time we see that carries coming from the $\beta_{u,j}(p)$ are compensated for by carries coming from $\alpha_{u,j}(p)$ for $1 \leq u \leq i-1$, but we have $c_p(\alpha_{i,j}(p),k_A) \geq A+1$ and $c_p(\beta_{i,j}(p),k_A) =0$. Therefore we again find that $\nu_p(a_{k_A}) \leq -A-1$, and so in all cases, if the parameters are not numerator majorized for $u \pmod{N}$, then the corresponding series is not $p$-adically bounded for all primes $p > \max_{i,j}(\alpha_i-\beta_j)^{-1}$ satisfying $p\equiv u\pmod{N}$. This concludes the proof of the theorem.
\end{proof}

\begin{rmk}
\label{r:explicit}
  We emphasize that this proof shows that if the parameters are numerator majorized for $u\pmod{N}$, then for all primes $p > \max_{i,j}(\alpha_i-\beta_j)^{-1}$ satisfying $p\equiv u \pmod{N}$, we have ${}_nF_{n-1}(\alpha,\beta,z) \in \pseries{\ZZ_p}{z}$.
\end{rmk}

\begin{cor}
  \label{c:rationaldensityformula}
  Let $\alpha = (\alpha_i)_{i=1}^n$ and $\beta = (\beta_i)_{i=1}^{n-1}$ denote hypergeometric parameters, and let $N$ denote the least common multiple of the denominators of the parameters. Then for every prime $p > \max_{i,j}(\alpha_i-\beta_j)^{-1}$, the series ${}_nF_{n-1}(\alpha,\beta,z)$ is $p$-adically bounded if and only if $p$ is congruent to an element of the following set:
  \[
  B(\alpha;\beta) =\{u \in (\ZZ/N\ZZ)^\times \mid \alpha \textrm{ and } \beta \textrm{ are numerator majorized for } u\}.
\]
In fact, for such primes $p$, the series ${}_nF_{n-1}(\alpha,\beta,z)$ is $p$-adically integral. In particular, the set of bounded primes for $\alpha,\beta$ has a Dirichlet density equal to $\tfrac{\abs{B(\alpha;\beta)}}{\phi(N)}$.
\end{cor}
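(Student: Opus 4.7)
The plan is to deduce the corollary almost immediately from Theorem \ref{t:rational} and the explicit strengthening noted in Remark \ref{r:explicit}, together with the standard Dirichlet density of primes in arithmetic progressions.

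First I would fix a prime $p > \max_{i,j}(\alpha_i-\beta_j)^{-1}$ and let $u \in (\ZZ/N\ZZ)^\times$ be its residue class mod $N$. Such a $p$ is automatically good for the parameters $\alpha,\beta$, since the denominators of the $\alpha_i$ and $\beta_j$ all divide $N$ and the bound on $p$ ensures $p$ cannot divide any nonzero difference $\alpha_i-\beta_j$. Thus the hypotheses of Lemma \ref{l:equivalent} apply, and Theorem \ref{t:rational} together with Remark \ref{r:explicit} yields that ${}_nF_{n-1}(\alpha;\beta;z)$ is $p$-adically bounded (in fact, $p$-adically integral, i.e.\ lies in $\pseries{\ZZ_p}{z}$) precisely when $\alpha,\beta$ are numerator majorized for the class $u$, i.e.\ when $u \in B(\alpha;\beta)$. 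This establishes the claimed characterization and the integrality refinement; the only subtlety is to verify that the proof of Theorem \ref{t:rational}, which was stated for ``sufficiently large'' primes, in fact goes through as soon as $p > \max_{i,j}(\alpha_i-\beta_j)^{-1}$, but this is exactly the threshold needed to invoke Lemma \ref{l:equivalent}, and the rest of the argument in Theorem \ref{t:rational} uses only that estimate together with the digit formula of Lemma \ref{l:digitformula}.

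Next, for the density statement, observe that the characterization above shows
\[
\{p \text{ prime} : p > C,~ {}_nF_{n-1}(\alpha;\beta;z) \text{ is } p\text{-adically bounded}\} = \bigsqcup_{u \in B(\alpha;\beta)} \{p \text{ prime} : p \equiv u \pmod{N},~ p > C\},
\]
where $C = \max_{i,j}(\alpha_i-\beta_j)^{-1}$. By Dirichlet's theorem on primes in arithmetic progressions, each class $u \in (\ZZ/N\ZZ)^\times$ contributes Dirichlet density $1/\phi(N)$, and removing the finitely many primes with $p \leq C$ does not affect the density. Summing over the $\abs{B(\alpha;\beta)}$ contributing classes yields the Dirichlet density $\abs{B(\alpha;\beta)}/\phi(N)$, as claimed.

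The main obstacle is essentially bookkeeping: one must be careful that the threshold in Theorem \ref{t:rational} really is the explicit bound $\max_{i,j}(\alpha_i-\beta_j)^{-1}$ rather than some vaguely larger constant. But a quick inspection shows the only place in the proof of Theorem \ref{t:rational} where the hypothesis on the size of $p$ enters is through Lemma \ref{l:equivalent}, whose proof only requires $p(\alpha_u-\beta_v) > 1$ for all relevant $u,v$. Beyond this, the argument is a straightforward partition of primes into residue classes mod $N$ and application of Dirichlet's theorem.
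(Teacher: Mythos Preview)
Your proposal is correct and follows essentially the same approach as the paper, which simply states that the corollary follows directly from Theorem \ref{t:rational} and Remark \ref{r:explicit}. You have spelled out the natural details behind that one-line proof, including the appeal to Dirichlet's theorem for the density computation, and your care about the explicit threshold is exactly the content of Remark \ref{r:explicit}.
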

\begin{proof}
This follows directly from Theorem \ref{t:rational} and Remark \ref{r:explicit}.
\end{proof}

\begin{rmk}
\label{r:cyclic}
  Notice that the set $B(\alpha;\beta)$, if nonempty, is a union of cyclic subgroups of $(\ZZ/N\ZZ)^\times$.
\end{rmk}

\begin{ex}
  \label{ex2}
In Example \ref{ex1} we saw that for
  \[
  {}_3F_2(\tfrac 15, \tfrac 25, \tfrac 35; \tfrac 45, \tfrac 12;z)
\]
we have $N = 10$ and $B(\tfrac 15, \tfrac 25, \tfrac 35; \tfrac 45, \tfrac 12) = \{1\} \subseteq (\ZZ/10\ZZ)^\times$. Therefore, the bounded primes $p > 10$ for this series are precisely those satisfying $p\equiv 1 \pmod{10}$, so that the  density of bounded primes in this case is $\tfrac 14$. In fact, this series is $p$-integral for such primes, so that the only primes appearing in the denominators of this series satisfy $p \leq 7$ or $p\equiv 3,7,9 \pmod{10}$.
\end{ex}

The following result generalizes Theorem 4.14 from \cite{FGM}.
\begin{cor}
  Let $\alpha = (\alpha_i)_{i=1}^n$ and $\beta = (\beta_i)_{i=1}^{n-1}$ denote hypergeometric parameters, and assume that they are ordered in increasing order of their fractional parts. Then the density of bounded primes for ${}_nF_{n-1}(\alpha,\beta,z)$ is zero if and only if there exists an index $i$ such that
  \[
  \{\beta_i\} < \{\alpha_i\}.
  \]
\end{cor}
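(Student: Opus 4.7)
The plan is to reduce the density-zero condition to a single combinatorial question about the congruence class $u \equiv 1 \pmod N$, and then resolve that question via a marriage-theorem / sorting argument.

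\textbf{Reduction to $u = 1$.} For any $u \in (\ZZ/N\ZZ)^\times$, taking $j = 0$ in Definition \ref{d:numeratormajorized} forces the existence of an injection $\sigma_0 \colon \{1,\ldots,n-1\} \to \{1,\ldots,n\}$ satisfying $\{-\beta_i\} \leq \{-\alpha_{\sigma_0(i)}\}$ for every $i$, since $u^0 = 1$. Consequently, if no such injection exists, then $B(\alpha;\beta) = \emptyset$ and the density is zero by Corollary \ref{c:rationaldensityformula}. Conversely, if such an injection does exist, then $u = 1$ is automatically numerator majorized, because $u^j = 1$ for every $j$, so $1 \in B(\alpha;\beta)$ and the density is at least $1/\phi(N) > 0$. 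Thus the density vanishes exactly when no such injection exists.

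\textbf{Reformulation via fractional parts.} Using $\{-x\} = 1 - \{x\}$ for $x \notin \ZZ$, the inequality $\{-\beta_i\} \leq \{-\alpha_{\sigma(i)}\}$ is equivalent to $\{\alpha_{\sigma(i)}\} \leq \{\beta_i\}$. So the density is zero precisely when there is \emph{no} injection $\sigma \colon \{1,\ldots,n-1\} \to \{1,\ldots,n\}$ with $\{\alpha_{\sigma(i)}\} \leq \{\beta_i\}$ for all $i$.

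\textbf{Hall-type argument.} With the $\alpha_i$ and $\beta_i$ already listed in increasing order of fractional parts, I claim such an injection exists if and only if $\{\alpha_i\} \leq \{\beta_i\}$ for every $i = 1, \ldots, n-1$. The ``if'' direction is trivial: take $\sigma = \id$. For the ``only if'' direction, fix an index $i$ and suppose $\sigma$ exists; then the $i$ distinct values $\{\alpha_{\sigma(1)}\}, \ldots, \{\alpha_{\sigma(i)}\}$ are each at most $\{\beta_i\}$, since $\{\alpha_{\sigma(k)}\} \leq \{\beta_k\} \leq \{\beta_i\}$ for $k \leq i$. Hence at least $i$ of the sorted $\{\alpha_j\}$'s lie below $\{\beta_i\}$, which forces $\{\alpha_i\} \leq \{\beta_i\}$ by sortedness.

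Combining, the density of bounded primes is zero if and only if $\{\alpha_i\} > \{\beta_i\}$ (equivalently $\{\beta_i\} < \{\alpha_i\}$) for some $i \in \{1, \ldots, n-1\}$, which is the claim. I don't expect any real obstacle; the proof is a clean unwinding of Corollary \ref{c:rationaldensityformula} combined with a standard interval-graph marriage argument.
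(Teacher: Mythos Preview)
Your proof is correct and follows essentially the same route as the paper: reduce to the single class $u=1$, then check that numerator majorization for $u=1$ (which reduces to a single $j$) holds iff the sorted identity matching works. The paper compresses this into two lines by invoking Remark \ref{r:cyclic} (that $B(\alpha;\beta)$ is a union of cyclic subgroups, hence empty iff $1\notin B$) and then asserting the sorted equivalence without justification; your version replaces the cyclic-subgroup remark with the equally trivial observation that $u^0=1$ for every $u$, and spells out the rearrangement/Hall step that the paper leaves implicit.
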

\begin{proof}
By Corollary \ref{c:rationaldensityformula} and Remark \ref{r:cyclic}, the density of bounded primes is zero if and only if $1 \not \in B(\alpha,\beta)$. The numerator majorization failure for $u=1$ is equivalent to the existence of an index $i$ with $\{-\beta_i\} > \{-\alpha_i\}$, or equivalently, $\{\beta_i\} < \{\alpha_i\}$ as claimed.
\end{proof}

\section{Quadratic irrational parameters}
\label{s:quadratic}
Let now $D \in \QQ$ be a nonsquare rational, and let $K = \QQ(\sqrt{D})$. Notice that for $a,b,c\in\QQ$ with $c \not \in \ZZ_{\leq 0}$, the series
\[
  F_D(a,b,c,z) \df {}_2F_1(a+b\sqrt{D},a-b\sqrt{D};c;z)
\]
has rational coefficients. We will assume $b \neq 0$, so that $F_D$ does not have rational parameters.
\begin{prop}
\label{p:split}
For all but finitely many primes $p$, a necessary condition for $F_D(a,b,c,z)$ to be $p$-adically bounded is that $p$ splits in $K$.
\end{prop}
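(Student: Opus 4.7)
The plan is to prove the contrapositive: for all but finitely many primes $p$, if $p$ does not split in $K$ then $F_D(a,b,c,z)$ is $p$-adically unbounded. The exceptional finite set consists of primes dividing the denominators of $a$, $b$, $c$, or $D$, the numerator of $b$, or the discriminant of $K$. Outside this set we have $a,b,c,D\in\ZZ_p$, $b\in\ZZ_p^\times$, and $p$ is unramified in $K$; so the only alternative to $p$ being split is $p$ being inert, and it suffices to rule out the inert case.

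The essential step is to pass from the Pochhammer symbols of the conjugate pair $a\pm b\sqrt{D}$ to their norm. Multiplying gives the rational identity
\[
(a+b\sqrt{D})_k\,(a-b\sqrt{D})_k \;=\; \prod_{j=0}^{k-1}\bigl((a+j)^2-b^2D\bigr),
\]
so the coefficient of $z^k$ in $F_D(a,b,c,z)$ is
\[
A_k \;=\; \frac{1}{(c)_k\,k!}\prod_{j=0}^{k-1}\bigl((a+j)^2-b^2D\bigr).
\]
When $p$ is an inert prime outside the exceptional set, $D$ is a non-square modulo $p$; hence $(a+j)^2\equiv b^2D\pmod{p}$ has no integer solution, since any such $j$ would exhibit $(a+j)/b$ as a square root of $D$ modulo $p$. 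Consequently each factor in the numerator is a $p$-adic unit, and the full numerator product lies in $\ZZ_p^\times$ for every $k\ge 0$.

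The final step observes that the denominator $(c)_k\,k!$ is an element of $\ZZ_p$ whose $p$-adic valuation is unbounded above: taking $k=p^m$ already gives $\nu_p(p^m!)=(p^m-1)/(p-1)\to\infty$. Combining the two estimates,
\[
\nu_p(A_{p^m}) \;\le\; -\frac{p^m-1}{p-1} \;\longrightarrow\; -\infty,
\]
so $F_D(a,b,c,z)$ cannot be $p$-adically bounded at any such inert $p$, and the contrapositive delivers the proposition. I do not expect a serious obstacle here; the only step with any content is recognising the numerator as a norm and invoking the fact that the polynomial $X^2-b^2D$ has no root modulo an inert prime, after which the size mismatch with $k!$ in the denominator forces unboundedness automatically.
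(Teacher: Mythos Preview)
Your proof is correct and follows essentially the same approach as the paper: both rewrite the $k$th coefficient as $\prod_{j=0}^{k-1}\bigl((a+j)^2-b^2D\bigr)\big/\bigl((c)_k\,k!\bigr)$, observe that for inert primes outside a finite set the numerator factors are all $p$-adic units (the paper phrases this as ``only split primes divide values of the minimal polynomial $P$'', you phrase it via the non-residue condition on $D$), and then note that $\nu_p(k!)\to\infty$ forces unboundedness. Your version is somewhat more explicit in carrying out step two, but the argument is the same.
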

\begin{proof}
  Let $A_n$ denote the $n$th coefficient of $F_D(a,b,c,z)$, and let $P(x)$ denote the minimal polynomial of $a+b\sqrt{D}$:
  \[P(x) = x^2 - 2ax + a^2-b^2D.\]
  Then we find that
  \[
    A_n = \frac{(a+b\sqrt{D})_n(a-b\sqrt{D})_n}{(c)_nn!} =\frac{\prod_{j=0}^{n-1}P(-j)}{(c)_nn!}.
  \]
  Since $a+b\sqrt{D}$ generates $K/\QQ$, and $P(x)$ is its minimal polynomial, with at most finitely many exceptions only rational primes that split in $K$ divide the values $P(j)$ in the numerator of $A_n$. On the other hand, the denominator is $(c)_nn!$, and all but finitely many exceptions depending only on $c$ divide this denominator to larger and larger powers as $n$ grows. Therefore, a necessary condition to have cancellation in $A_n$ for large enough primes is that $p$ be split in $K$.
\end{proof}

\begin{rmk}
%\textcolor{red}{C: what are you trying to say here? Please clarify.}
We  say \(u\) splits in \(K\) if primes equivalent to \(u\mod M\) split in \(K\). This is well defined as the splitting condition of a prime in a number field \(K\) relies on the congruence class of the prime \(\mod \Delta_K\), where \(\Delta_K\) denotes the discriminant of the number field \(K\). Since \(M = \lcm(\denom(a),\denom(c),\Delta_K)\), all primes equivalent to \(u\mod M\) split in \(K\) if any prime equivalent to \(u\mod M\) splits in \(K\).
\end{rmk}

Let $S$ denote the set of rational primes that split in $K$. Notice that Corollary \ref{c:FGM} still applies to $F_D(a,b,c,z)$ for all but finitely many primes $p \in S$. In the rational case we are able to leverage the periodicity of $p$-adic expansions in conjunction with Corollary \ref{c:FGM} to produce unbounded denominators. But the $p$-adic expansions of the irrational hypergeometric parameters $\alpha = (a+b\sqrt{D},a-b\sqrt{D})$ are no longer periodic. However, it turns out that we can still study the density of bounded primes in a similar way if these $p$-adic irrational numbers have sufficiently randomly distributed digits. To begin, we recall the following definition.

\begin{dfn}
\label{d:normal}
  A $p$-adic integer $\alpha \in \ZZ_p$ is said to be \emph{normal} provided that every sequence of $p$-adic digits occurs equally often in the following asymptotic sense: let $\alpha = \sum_{n\geq 0}\alpha_np^n$ be the $p$-adic expansion of $\alpha$ for $\alpha_n \in \{0,1,\ldots, p-1\}$ for all $n$, and let $B = B_1B_2\cdots B_{m}$ be a bit string, where $B_j \in \{0,1,\ldots, p-1\}$ for all $j$. Then
  \[
  \lim_{N\to \infty}\frac{\abs{\{j\leq N \mid \alpha_j\alpha_{j+1}\cdots \alpha_{j+m-1} = B\}}}{N} = \frac{1}{p^m}.
\]
This should hold for any choice of $p$-adic bit string $B$.
\end{dfn}

Normality is an established property over the real numbers. For more information see a text on metric number theory, such as \cite{MNT}. 

There exist $p$-adic transcendental numbers that are not normal. Computations suggest that $p$-adic numbers that are algebraic over $\QQ$ of degree $d\geq 2$ may always be normal, though this is currently an open question. The following conjecture may be regarded as a weakened form of normality.

\begin{conj}
  \label{conj}
  Let $\alpha \in \overline \QQ\setminus \QQ$, and let $S$ be the set of rational primes that are totally split in $\QQ(\alpha)$. Let $r$ and $s$ be integers, and let $u,v$ be real numbers with $0\leq u < v \leq 1$. Let $\alpha_n(p,\sigma)$ denote the $n$th $p$-adic digit of $\alpha$ for $p \in S$ with respect to the field embedding $\sigma \colon \QQ(\alpha) \to \QQ_p$. Then for all but finitely many primes $p \in S$ (where the finite exceptional set depends on $\alpha$, $r$, $s$, $u$ and $v$),  the digit $\alpha_n(p,\sigma)$ is contained in the interval $(u(p-1),v(p-1))$ for infinitely many integers $n\geq 0$ in the arithmetic progression $n = rm+s$.
\end{conj}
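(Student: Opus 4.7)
The plan is to recast the conjecture as an equidistribution statement for the $p$-adic shift map $T_p \colon \ZZ_p \to \ZZ_p$, defined by $T_p(\sum_n x_n p^n) = \sum_n x_{n+1} p^n$, so that the $n$-th $p$-adic digit of $\sigma(\alpha)$ is the zeroth digit of $T_p^n \sigma(\alpha)$. The conjecture then asserts that for all but finitely many $p \in S$, the orbit of $T_p^s \sigma(\alpha)$ under iteration of $T_p^r$ visits the cylinder of elements whose zeroth digit lies in $(u(p-1), v(p-1))$ infinitely often. If $\sigma(\alpha)$ is $p$-adically normal in the stronger sense that its digit sequence is also Borel normal when restricted to every arithmetic progression of indices (a nontrivial but widely believed consequence of Definition \ref{d:normal}), then the conjecture follows immediately, since the restricted sequence has digits equidistributed over $\{0,1,\ldots,p-1\}$ and a positive fraction must fall in the prescribed interval.

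To attempt the unconditional form, I would argue by contradiction. Suppose there exist $\alpha, r, s, u, v$ and infinitely many split primes $p$ such that, beyond some initial segment, no digit $\alpha_n(p,\sigma)$ with $n \equiv s \pmod r$ lies in $(u(p-1), v(p-1))$. For each such $p$, the embedding $\sigma(\alpha)$ is forced into a self-similar compact set $E_p \subset \ZZ_p$ of Haar measure zero, where each digit position along the prescribed arithmetic progression must avoid a band of proportion approximately $(v-u)$. The structural goal is then to show that an algebraic irrational of bounded degree and height cannot simultaneously be trapped in $E_p$ for infinitely many split primes $p$.

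The most plausible technical tools to invoke are $p$-adic Diophantine approximation results: the $p$-adic Schmidt subspace theorem of Schlickewei and Evertse, and the $p$-adic analogues of Adamczewski--Bugeaud's digit-complexity theorems, which establish that algebraic irrationals cannot have overly simple $p$-adic expansions. A complementary angle would exploit the uniformity of the statement over the infinite family of split primes: one could try to combine local obstructions at many primes via a product-formula or global height argument, so that the hypothesized digit confinement at infinitely many $p$ forces $\alpha$ to have infinite global height, contradicting algebraicity.

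The hard part will be closing the gap between the existing block-complexity lower bounds and the equidistribution assertion actually required. The conjecture is morally a form of Borel normality along arithmetic progressions for algebraic numbers, and even the real-variable analogue is wide open --- one does not know, for instance, that every base-$10$ digit appears infinitely often in the decimal expansion of $\sqrt{2}$, let alone along arithmetic progressions of digit positions. All current tools fall decisively short of ruling out the confinement hypothesized above. In my estimation, a complete unconditional proof would require substantial new input in the Diophantine approximation of $p$-adic algebraic numbers, and the plan above is best viewed as a proposed attack strategy rather than a viable proof; the authors' decision to treat Conjecture \ref{conj} as a working hypothesis rather than a theorem is therefore well-founded.
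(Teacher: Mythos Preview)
The statement you were asked to prove is a \emph{conjecture} in the paper, not a theorem; the authors do not prove it, and you correctly recognize in your final paragraph that an unconditional proof is out of reach with current tools. On that score your assessment matches the paper's.

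Where your discussion diverges from the paper is in the conditional implication ``normality $\Rightarrow$ Conjecture \ref{conj}.'' You assert that one needs normality in a \emph{stronger} sense --- namely that the digit sequence remain Borel normal when restricted to an arithmetic progression of indices --- and you describe this as ``a nontrivial but widely believed consequence'' of Definition \ref{d:normal}. The paper, in Proposition \ref{p:normalimpliesconj}, shows that the \emph{standard} normality of Definition \ref{d:normal} already suffices, by a short pigeonhole argument: if some digit $b$ appeared only finitely often along the progression $n=rm+s$, then the block $B=bb\cdots b$ of length $r$ could appear only finitely often in the full expansion (since any length-$r$ window meets the progression), contradicting the density $p^{-r}$ guaranteed by normality. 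Hence every digit appears infinitely often along the progression, which is more than enough to land in $(u(p-1),v(p-1))$ infinitely often. So your conditional step is correct but invokes an unnecessarily strong hypothesis; the paper's argument is both simpler and sharper.

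Your sketch of an unconditional attack via $p$-adic subspace theorems and Adamczewski--Bugeaud-type complexity bounds is reasonable as a research direction, but as you yourself conclude, none of those tools currently deliver the required digit-distribution statement, even for a single prime. The paper makes no attempt in this direction and simply adopts the conjecture as a working hypothesis.
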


\begin{prop}
  \label{p:normalimpliesconj}
Suppose that $\alpha \in \overline\QQ \setminus \QQ$ has the property that $\sigma(\alpha)$ is $p$-adically normal for any embedding $\sigma \colon \QQ(\alpha) \to \QQ_p$. Then Conjecture \ref{conj} holds for $\alpha$.
\end{prop}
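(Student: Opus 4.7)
The plan is to reduce to a single-digit statement and prove it directly by contradiction, applying Definition \ref{d:normal} to bit strings of arbitrarily large length. Write $\alpha_n = \alpha_n(p, \sigma)$, and assume without loss of generality that $0 \leq s < r$.

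For any prime $p > 1/(v-u) + 1$, the interval $(u(p-1), v(p-1))$ has length strictly greater than $1$ and therefore contains at least one integer $d_0 \in \{0, 1, \ldots, p-1\}$. Since this condition excludes only finitely many primes (depending on $u$ and $v$), it suffices to show that for any fixed $d_0 \in \{0, \ldots, p-1\}$ there are infinitely many $n \equiv s \pmod r$ with $\alpha_n = d_0$.

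Suppose for contradiction that only finitely many such $n$ exist, so that $\alpha_n \neq d_0$ for all $n \geq M_0$ satisfying $n \equiv s \pmod r$. For each $K \geq 1$, let $T$ be the set of base-$p$ bit strings of length $rK$ whose digits at the $K$ positions $s, s+r, \ldots, s+(K-1)r$ all differ from $d_0$, so that $|T| = (p-1)^K p^{rK - K}$. By Definition \ref{d:normal}, each specific bit string of length $rK$ has asymptotic density $p^{-rK}$ in the expansion of $\sigma(\alpha)$; summing over $T$ (a finite sum), the density of starting positions $n \geq 0$ whose length-$rK$ block lies in $T$ equals $((p-1)/p)^K$. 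On the other hand, every $n \geq M_0$ with $n \equiv 0 \pmod r$ has its length-$rK$ block inside $T$: the positions $n+s+jr$ for $j = 0, \ldots, K-1$ are all congruent to $s \pmod r$ and at least $M_0$, so by the contradictory assumption the digits there all differ from $d_0$. Since the set of such $n$ has density $1/r$ among the nonnegative integers, we obtain
\[
\frac{1}{r} \leq \left(\frac{p-1}{p}\right)^K,
\]
which is violated for $K$ sufficiently large (depending only on $p$ and $r$). This contradiction completes the proof.

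I see no serious obstacle: the argument uses nothing beyond Definition \ref{d:normal} and the elementary observation that $((p-1)/p)^K$ decays geometrically in $K$ while the density $1/r$ of a residue class modulo $r$ is fixed. A natural but strictly stronger question, not needed for Conjecture \ref{conj}, is whether the subsequence of digits $(\alpha_{rm+s})_{m \geq 0}$ is itself simply normal in base $p$; this appears to require the classical equivalence between block-frequency normality in base $p$ and simple normality in base $p^r$, which would be the main technical hurdle in any refinement, but such a strengthening is not needed for the infinitude statement of Conjecture \ref{conj}.
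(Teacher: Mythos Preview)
Your proof is correct and follows the same overall strategy as the paper: both reduce Conjecture~\ref{conj} to the claim that every digit $d_0$ occurs infinitely often along the arithmetic progression $(\alpha_{rm+s})_{m\geq 0}$, and both prove this claim by contradiction using Definition~\ref{d:normal}.

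The execution of the contradiction step differs, and the paper's version is shorter. Rather than working with the family $T$ of length-$rK$ strings that avoid $d_0$ at the progression positions and comparing the densities $((p-1)/p)^K$ and $1/r$, the paper considers the single constant string $B = d_0d_0\cdots d_0$ of length $r$. Any window of $r$ consecutive positions contains exactly one index congruent to $s \pmod r$, so if $d_0$ occurs only finitely often along the progression then $B$ can occur as a block at only finitely many starting positions; hence its asymptotic frequency is $0$, contradicting the value $1/p^r$ demanded by normality. Your argument is a valid alternative and has the mild advantage of making the quantitative gap explicit, but it does more bookkeeping than necessary; the single-string argument reaches the same contradiction with no summation over $T$ and no auxiliary parameter $K$.
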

\begin{proof}
We will show that given an arithmetic progression $n = rm+s$, then \emph{any} digit occurs infinitely often in the sequence $(\alpha_{rm+s}(p,\sigma))_{m\geq 0}$ of $p$-adic digits, which clearly implies Conjecture \ref{conj}.

Suppose that a digit $b$ only occurs finitely many times along this progression, and let $B$ be the string which is $m$ copies of $b$. Then the quantity
\[
\abs{\{j\leq N \mid \alpha_j(p,\sigma)\alpha_{j+1}(p,\sigma)\cdots \alpha_{j+m-1}(p,\sigma) = B\}}
\]
is bounded absolutely, independently of $N$, since at least one of the terms $\alpha_{j+i}(p,\sigma)$ for $0\leq i \leq m-1$ must meet the sequence $(\alpha_{rm+s}(p,\sigma))_{m\geq 0}$. As this contradicts the normality of $\sigma(\alpha)$, it thus verifies our claim that every digit appears infinitely often in $(\alpha_{rm+s}(p,\sigma))_{m\geq 0}$. This concludes the proof.
\end{proof}

Before establishing our density results, we reformulate the condition of $p$-adic unboundedness for $F_D$ slightly:
\begin{thm}
  Let $a,b,c$ denote rational numbers as above, and write the $p$-adic digits of $a+b\sqrt{D}-1$, $a-b\sqrt{D}-1$, and $c-1$ as $\alpha_j(p)$, $\alpha_j'(p)$ and $\gamma_j(p)$, respectively. Then the following are equivalent for all but finitely many primes that split in $K$:
  \begin{enumerate}
  \item the inequality $\gamma_j(p) > \max(\alpha_j(p),\alpha'_j(p))$ holds for infinitely many $j$;
  \item $F_D(a,b;c;z)$ has $p$-adically unbounded coefficients.
  \end{enumerate}
\end{thm}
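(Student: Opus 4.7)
\emph{Direction (1)$\Rightarrow$(2).} My plan is to construct an explicit sequence $k_A$ along which $\nu_p(A_{k_A})\to -\infty$, mirroring the strategy of Theorem \ref{t:rational}. Enumerate an infinite set of positions $j_1<j_2<\cdots$ where $\gamma_{j_u}(p)>\max(\alpha_{j_u}(p),\alpha'_{j_u}(p))$, and put
\[
k_A \df \sum_{u=1}^{A} \bigl(p-\gamma_{j_u}(p)\bigr)\,p^{j_u}.
\]
Since each $\gamma_{j_u}(p)\geq 1$, the weights $p-\gamma_{j_u}(p)$ lie in $\{1,\ldots,p-1\}$ and, the $j_u$ being distinct, this is already the $p$-adic expansion of $k_A$. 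A direct digit-by-digit trace of $(c-1)+k_A$ shows that each position $j_u$ produces an outgoing carry (the local sum is at least $p$), so $c_p(c-1,k_A)\geq A$. Meanwhile, in $(\alpha-1)+k_A$ the digit sum at $j_u$ equals $\alpha_{j_u}(p)+(p-\gamma_{j_u}(p))<p$ by hypothesis, and at every other position the $k_A$-digit vanishes; a short induction from position $0$ upward then shows the running carry remains identically zero, giving $c_p(\alpha-1,k_A)=0$ and symmetrically $c_p(\alpha'-1,k_A)=0$. Corollary \ref{c:FGM} forces $\nu_p(A_{k_A})\leq -A$, yielding unboundedness.

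\emph{Direction (2)$\Rightarrow$(1).} I would argue by contrapositive, assuming $\gamma_j(p)\leq \max(\alpha_j(p),\alpha'_j(p))$ for every $j\geq J_0$ and aiming to deduce a uniform lower bound $\nu_p(A_k)\geq -C$. The plan is to adapt the carry-compensation argument from the proof of Theorem \ref{t:rational}: writing $\epsilon^c_j,\epsilon^\alpha_j,\epsilon^{\alpha'}_j\in\{0,1\}$ for the outgoing carries at position $j$ in $(c-1)+k$, $(\alpha-1)+k$ and $(\alpha'-1)+k$, the task reduces to bounding
\[
D_J \df \sum_{j\leq J}\bigl(\epsilon^c_j-\epsilon^\alpha_j-\epsilon^{\alpha'}_j\bigr)
\]
from above, uniformly in $J$ and $k$. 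At each position $j\geq J_0$ one performs a case analysis: if the carry $\epsilon^c_j=1$ is generated directly, i.e.\ $\gamma_j+k_j\geq p$, then the max condition forces $\max(\alpha_j,\alpha'_j)+k_j\geq p$, so a new carry also occurs on the numerator side; if instead $\epsilon^c_j=1$ is purely propagated, then the restrictive digit pattern $\gamma_j+k_j=p-1$ with $\epsilon^c_{j-1}=1$ must hold, and the propagation chain can be amortized against the carry that originally initiated it. The finitely many positions $j<J_0$ are absorbed into the constant $C$.

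\emph{Main obstacle.} The digit-local construction in (1)$\Rightarrow$(2) is clean and requires no appeal to periodicity or normality. The real difficulty lies in the carry bookkeeping of (2)$\Rightarrow$(1): the pointwise inequality $\gamma_j\leq\max(\alpha_j,\alpha'_j)$ does \emph{not} by itself force $\epsilon^c_j\leq\epsilon^\alpha_j+\epsilon^{\alpha'}_j$, because the three carry sequences can drift out of phase due to mismatched incoming carries. Confining the cumulative mismatch $D_J$ to a universal bound is the main technical step, and requires an amortized chain argument built around the exceptional propagation pattern isolated in the case analysis above.
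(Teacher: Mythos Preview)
Your (1)$\Rightarrow$(2) construction is exactly the paper's: it builds the same integer $m_r=\sum_{u=1}^r(p-\gamma_{j_u}(p))\,p^{j_u}$ and invokes Corollary~\ref{c:FGM} to obtain $\nu_p(A_{m_r})\leq -r$.

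For (2)$\Rightarrow$(1) the paper offers only a single sentence (``by Corollary~\ref{c:FGM} there are infinitely many terms where the $j$th digit of $c-1$ is larger\ldots''), essentially asserting the conclusion without engaging the carry bookkeeping. Your contrapositive setup and case analysis go considerably further than this, and you are right to isolate carry propagation as the crux. However, the amortization outline as stated does not close the gap. At a purely propagated position with $\gamma_j+k_j=p-1$, the hypothesis $\gamma_j\leq\max(\alpha_j,\alpha'_j)$ only yields $\max(\alpha_j,\alpha'_j)+k_j\geq p-1$; when equality $\gamma_j=\max(\alpha_j,\alpha'_j)$ holds, a numerator carry at $j$ requires a numerator carry-\emph{in}, and since the side achieving the max can switch between $\alpha$ and $\alpha'$ from one position to the next, the numerator chain can break while the $c$-chain persists. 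Concretely (at the level of digit sequences, ignoring which $p$-adic numbers realise them): in base $7$ take $\gamma_j\equiv 3$, $(\alpha_j,\alpha'_j)$ alternating $(3,0),(0,3),(3,0),\ldots$, and $k$ with digits $4,3,4,3,\ldots$; then every $c$-position carries, $\alpha$ carries only at even positions, $\alpha'$ never carries, and your $D_J$ grows linearly --- so a single initiating carry cannot amortize the chain. The clean observation that makes the argument work is that at any position with \emph{strict} inequality $\gamma_j<\max(\alpha_j,\alpha'_j)$ the compensation is immediate and local: $\epsilon^c_j=1$ forces $\gamma_j+k_j\geq p-1$, hence $\max(\alpha_j,\alpha'_j)+k_j\geq p$, and a numerator carry is generated outright with no amortization needed. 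All of the difficulty therefore lives at positions where $\gamma_j=\max(\alpha_j,\alpha'_j)$; neither your amortization sketch nor the paper's one-liner addresses those.
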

\begin{proof}
  First suppose that (1) holds and let $j_1,j_2,\ldots$ denote the sequence of indices where $\gamma_j(p) > \max(\alpha_j(p),\alpha'_j(p))$. If we set
  \[
    m_r=\sum_{k=1}^r (p-\gamma_{j_k}(p))p^{j_k}
  \]
  and let $A_n$ denote the $n$th coefficient of $F_D(a,b;c;z)$, then we find by Corollary \ref{c:FGM} that $\nu_p(A_{m_r}) \leq -r$. Thus (1) implies (2).

  Conversely, if (2) holds, then by Corollary \ref{c:FGM} there are infinitely many terms where the $j$th digit of $c-1$ is larger than the $j$th digits of both $a\pm b\sqrt{D}-1$.
\end{proof}

Before establishing our density results we prove a simple lemma:
\begin{lem}
\label{l:digitsum}
Let $x,y\in\ZZ_p$ and set $z = x+y$. Then
\[
x[j] + y[j] \geq z[j]-1.
\]
\end{lem}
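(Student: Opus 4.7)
The plan is to reduce the inequality to a one-line identity coming from the standard add-and-carry algorithm for $p$-adic addition. Let $c_j \in \{0,1\}$ denote the carry produced at digit $j$ when computing $x+y=z$, with the convention $c_{-1}=0$. Then by definition of the digits one has the identity
\[
z[j] = x[j] + y[j] + c_{j-1} - p\,c_j
\]
for every $j \geq 0$, because $x[j]+y[j]+c_{j-1}$ is a nonnegative integer less than $2p$, and subtracting $p\,c_j$ reduces it to the residue in $\{0,1,\ldots,p-1\}$.

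Next I would rearrange this identity to
\[
x[j] + y[j] - z[j] = p\,c_j - c_{j-1}.
\]
The right-hand side is minimized when $c_j = 0$ and $c_{j-1} = 1$, in which case it equals $-1$; otherwise it is nonnegative (either $0$, $p$, or $p-1$, depending on the carry pattern). Hence $x[j]+y[j]-z[j]\geq -1$, which is the desired bound.

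There is no real obstacle: the only subtlety is remembering that the carry into the lowest position is zero so that the identity is valid for all $j\geq 0$, and that the carries for the sum of two base-$p$ digits (with incoming carry at most $1$) are always in $\{0,1\}$, which is what forces the lower bound $-1$ rather than something worse.
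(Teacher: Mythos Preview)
Your proof is correct and follows essentially the same approach as the paper: both rely on the elementary add-and-carry identity $z[j] = x[j]+y[j]+c_{j-1}-p\,c_j$ and then observe that the worst case is a carry-in with no carry-out. The paper presents this as a four-case table (and is in fact slightly looser, allowing $\alpha\in\{1,2\}$ for the carry, whereas you correctly note that $c_j\in\{0,1\}$ always), while you write the identity once and bound $p\,c_j - c_{j-1}$ directly; these are the same argument in different dress.
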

\begin{proof}
Since $p$-adic addition involves carries, we deduce that
\[
x[j]+y[j]= \begin{cases}
    z[j]& \textrm{no carries at digits $j-1$ and $j$,}\\
    z[j]-1 & \textrm{carry at $j-1$, not at $j$,}\\
    z[j] + \alpha p& \textrm{carry at $j$, not at $j-1$,}\\
    z[j]+\alpha p-1& \textrm{carries at $j-1$ and $j$,}
\end{cases} 
\]
where $\alpha \in \{1,2\}$. Therefore the lemma holds for elementary reasons.
\end{proof}

The density results differ depending on whether $2a$ is an integer or not. We first treat the case where $2a$ is an integer:
\begin{prop}
\label{prop1}
Let $a$ be rational number with $2a \in \ZZ$, let $b$ be rational and nonzero, and let \( c\) be rational and different from a negative integer. Let $K = \QQ(\sqrt{D})$ with $D \in \ZZ$ not a perfect square. Let $M$ be the least common multiple of the denominator of \(c\), and twice the discriminant of \(K\). Consider the hypergeometric series defined by:
\[
F_D= {}_2F_1\left(a+b\sqrt D, a-b\sqrt D; c ; z\right) 
\]
Then there exists a density $\delta$ of $p$-adically bounded primes for $F_D$. If we set
\[
B_K(a;c) \df \{u\in \left(\ZZ/M\ZZ\right)^\times {\vert}
 \{-u^jc\} \leq \{-u^j\tfrac 12\} \text{ for all j and \(u\) splits in K}\},
\]
then $\delta$ satisfies
\[
\frac{\abs{B_K(a;c)}}{\phi(M)} \leq \delta \leq \frac 12.
\]
Furthermore, if Conjecture \ref{conj} holds, then $\delta = \frac{\abs{B_K(a;c)}}{\phi(M)}$. In particular, the density is conjecturally independent of $b$.
\end{prop}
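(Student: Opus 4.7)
The plan is to establish Proposition \ref{prop1} in three stages: the unconditional upper bound $\delta \leq 1/2$, the unconditional lower bound $\delta \geq |B_K(a;c)|/\phi(M)$, and the conditional equality under Conjecture \ref{conj}. The upper bound is immediate from Proposition \ref{p:split}, since all but finitely many bounded primes must split in $K$ and the density of split primes is $1/2$ by Chebotarev.

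For the lower bound, I would fix $u \in B_K(a;c)$ and a sufficiently large prime $p \equiv u \pmod M$. The crucial role of the hypothesis $2a \in \ZZ$ is that, after normalizing $a$ modulo $\ZZ$ to lie in $[0,1)$ (so $a \in \{0, 1/2\}$), the trace $\alpha + \alpha' = 2a - 2 \in \{-1, -2\}$ is a negative integer whose $p$-adic digits are eventually all $p - 1$. A direct carry analysis of the equation $\alpha + \alpha' = 2a - 2$ in $\ZZ_p$ then shows that $\alpha_j(p) + \alpha'_j(p) \in \{p-2, p-1, 2p-2\}$ at every position $j$, which (since $p$ is odd) forces $\max(\alpha_j(p), \alpha'_j(p)) \geq (p-1)/2$. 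Meanwhile, $u \in B_K$ translates via Lemma \ref{l:digitformula} to $\gamma_j(p) \leq (p-1)/2$ at every $j$. Combining, the digit comparison $\gamma_j(p) \leq \max(\alpha_j(p), \alpha'_j(p))$ holds at every position, so the digit-criterion theorem preceding the proposition yields $p$-adic boundedness of $F_D$. Thus every $u \in B_K$ contributes an entire congruence class to the bounded primes.

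For the conditional equality, I would fix $u \in (\ZZ/M\ZZ)^\times$ that splits in $K$ but lies outside $B_K$ and exhibit unboundedness for all large $p \equiv u \pmod M$. There exists $j_0$ with $v := \{-u^{j_0} c\} > 1/2$, and by periodicity of the $p$-adic expansion of $c - 1$ (period dividing $\phi(M)$) we have $\gamma_n(p) = \gamma_{j_0}(p) \approx vp$ throughout the arithmetic progression $n \equiv j_0 \pmod{\phi(M)}$. Fixing $\epsilon \in (0, v - 1/2)$, I would invoke Conjecture \ref{conj} applied to $a + b\sqrt D$ with interval $((1-v+\epsilon)(p-1), (v-\epsilon)(p-1))$ along this progression to obtain infinitely many indices $n$ where $\alpha_n(p)$ lies in this narrow interval around $(p-1)/2$. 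The trace relation $\alpha + \alpha' = 2a - 2$ (combined with the same carry analysis as before) then forces $\alpha'_n(p)$ to lie strictly below $(v - \epsilon)(p-1)$, so both digits are strictly less than $\gamma_n(p)$ for $p$ large. The digit-criterion theorem then yields unboundedness; no congruence class outside $B_K$ contributes, so $\delta = |B_K(a;c)|/\phi(M)$. Since $b$ enters only through the embedding of $\sqrt D$ into $\QQ_p$, the density is independent of $b$.

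The hard part will be the joint digit control in the conditional step: Conjecture \ref{conj} governs only a single digit of a single conjugate at a time, so the key point is that the trace constraint $\alpha + \alpha' \in \ZZ$ automatically propagates the narrow-interval condition from $\alpha_n(p)$ to $\alpha'_n(p)$. A secondary technicality is excluding the maximal-carry configuration with $\alpha_j(p) = \alpha'_j(p) = p - 1$; this is handled by the choice $v - \epsilon < 1$ in Conjecture \ref{conj}, which keeps $\alpha_n(p)$ strictly below $p - 1$ and forces $\alpha_j(p) + \alpha'_j(p) \in \{p-2, p-1\}$ at the indices of interest. The existence of $\delta$ itself should be read in the Dirichlet-density sense, with the lower/upper bounds applying to the lower/upper Dirichlet densities unconditionally and coalescing to a true density only under the conjecture.
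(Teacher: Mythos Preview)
Your proposal is correct and follows essentially the same route as the paper. Both arguments normalize to $a=\tfrac12$, exploit the trace identity $(\tfrac12+b\sqrt D-1)+(\tfrac12-b\sqrt D-1)=-1$ to force $\max(\alpha_j,\alpha'_j)\geq (p-1)/2$ via Lemma~\ref{l:digitsum}, translate $u\in B_K(a;c)$ through Lemma~\ref{l:digitformula} into $\gamma_j\leq (p-1)/2$, and then invoke the digit-comparison criterion; for the conditional direction both use Conjecture~\ref{conj} along the periodic progression of the bad digit of $c-1$ to pin $\alpha_n$ into a symmetric window about $(p-1)/2$, and then propagate to $\alpha'_n$ via the trace relation.

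Two small remarks. First, the paper in fact gets the \emph{exact} identity $\alpha_j+\alpha'_j=p-1$ for all $j$ when $a=\tfrac12$ (an easy induction shows no carries ever occur when summing to $-1$), which slightly streamlines the conditional step; your weaker trichotomy $\{p-2,p-1,2p-2\}$ is correct and suffices, and your exclusion of the $2p-2$ case via $\alpha_n<(v-\epsilon)(p-1)<p-1$ is exactly what is needed. Second, your closing observation that the unconditional assertion really yields only lower and upper Dirichlet-density bounds, with a genuine density emerging only under Conjecture~\ref{conj}, is a sharper reading than the paper's phrasing.
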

\begin{proof}
In order to apply Lemma \ref{l:digitformula} below, we need to adjust our rational numbers $a$ and $c$ so that they lie in the range $0 < a,c < 1$. This can be done using classical transformation formulas for ${}_2F_1$ relating series whose parameters differ by integers. These transformation formulas will change denominators of the series at finitely many primes, so that it it harmless to assume that $0 < a,c <1$. Henceforth we assume this throughout the proof. In particular, we now take $a = 1/2$.

Since $M$ is even, any integer \(u\) coprime to \(M\) must be odd, the inequality in the definition of $B_K$ simplifies to
\[
\{-u^j c\} \le \frac 12.
\]
By Lemma \ref{l:digitformula}, this implies that if $p$ is congruent to an element of $B_K(a,c)$ mod $M$, then we have
\begin{equation}
\label{eq:cdigitsbound}
(c-1)[j] \le \tfrac{p-1}2    
\end{equation}
for all $j$, where now $x[j]$, for a $p$-adic integer $x$, denotes the $j$th $p$-adic digit of $x$.

Apply Lemma \ref{l:digitsum} with $x = \tfrac 12 + b\sqrt{D}-1$ and $y=\tfrac 12 - b\sqrt{D}-1$. We deduce that
\[
(\tfrac 12 + b\sqrt{D}-1)[j]+(\tfrac 12 - b\sqrt{D}-1)[j] \geq (-1)[j]-1 = p-2.
\]
In particular, at least one of $(\tfrac 12 \pm b\sqrt{D}-1)[j]$ is $\geq (p-1)/2$. But then it follows by Corollary \ref{c:FGM} and equation \eqref{eq:cdigitsbound}, as in the arguments of \cite{FGM}, that carries arising from the denominator parameter $c$ are balanced by the carries arising from the numerator parameters $a\pm b\sqrt{D}$. Therefore, $F_D$ is $p$-adically bounded for all primes congruent to elements of $B_D(a;c)$.

Now assume Conjecture \ref{conj}. For every prime $p$ large enough and such that $p$ is not congruent to an element of $B_K(a;c)$ we will show that $F_D$ is $p$-adically unbounded. If $p$ is such a prime, where the size bound will be identified below, then by definition of $B_K(a;c)$, there exists an integer $j$ such that
\[
\{-p^jc\} > \frac 12.
\]
If $A$ is the multiplicative order of $p \pmod{M}$, we then obtain 
\[
\{-p^{j+nA}c\} > \frac 12
\]
for all integers $n$, since the left side is independent of $n$. Moreover, the left hand side of this inequality is an element of $(1/M)\ZZ$, and so we deduce that:
\[
\{-p^{j+nA}c\} \geq \frac 12 + \frac{1}{M}
\]
for all integers $n$.

Now, notice by Lemma \ref{l:digitformula} that $c-1$ has a periodic $p$-adic expansion of period dividing $A$, since the denominator of $c$ divides $M$. In particular, the terms 
\[(c-1)[j+nA]\] 
are independent of $n$. Also, if $p$ is large enough, Lemma \ref{l:digitformula} and the inequality above imply that 
\[
(c-1)[j+nA] = \floor{\{-p^{j+nA}c\}p} \geq \floor{\frac p2+\frac pM} \geq \frac{p-1}{2}+\floor{\frac pM}
\]
That is, for all $n\geq 0$ we have shown that
\begin{equation}
    \label{eq:cboundprop1}
    \floor{\frac pM} \leq (c-1)[j+nA] - \frac{p-1}{2}.
\end{equation}

Now, Conjecture \ref{conj} yields an infinite subsequence $(x_n)$  of the arithmetic progression $j+nA$ such that
\begin{equation}
    \label{eq:cboundprop1,2}
    \frac{p-1}{2}-\frac 12 \floor{\frac pM} < (\tfrac 12 +b\sqrt{D}-1)[x_n] < \frac{p-1}{2}+\frac 12 \floor{\frac pM}
\end{equation}
for all $n\geq 0$ as long as $p$ is large enough.

Notice that
\[
(\tfrac 12 -b\sqrt{D}-1)[m] = p-1-(\tfrac 12 +b\sqrt{D}-1)[m]
\]
for all $m\geq 0$, since 
\[
(\tfrac 12 -b\sqrt{D}-1) + (\tfrac 12 +b\sqrt{D}-1) = -1 = \sum_{n\geq 0} (p-1)p^n.
\]
If we combine this observation with equation \eqref{eq:cboundprop1,2} we deduce likewise:
\begin{equation}
    \label{eq:negativebound}
    \frac{p-1}{2}-\frac 12 \floor{\frac pM} < (\tfrac 12 -b\sqrt{D}-1)[x_n] < \frac{p-1}{2}+\frac 12 \floor{\frac pM}
\end{equation}
Now equations \eqref{eq:cboundprop1}, \eqref{eq:cboundprop1,2} and \eqref{eq:negativebound} yield
\[
\max\left((\tfrac12+b\sqrt D-1)[x_n],(\tfrac12-b\sqrt D-1)[x_n]
\right) < (c-1)[x_n]
\]
for all $n\geq 0$. Using this, it is now straightforward to construct a sequence of indices along which the coefficients of $F_D$ go to infinity in the $p$-adic absolute value, similarly to the argument in the proof of Theorem \ref{t:rational}. This concludes the proof.
\end{proof}

\begin{cor}
Assume Conjecture \ref{conj}. Then for a set of parameters with \(2a\in\ZZ \), the series $F_D$ has a Dirichlet density of bounded primes equal to $0$ if and only if \(c<\tfrac12\).
\end{cor}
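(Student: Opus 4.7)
The plan is to invoke Proposition \ref{prop1} in tandem with Conjecture \ref{conj}: under the conjecture, the density of bounded primes for $F_D$ equals $\abs{B_K(a;c)}/\phi(M)$, so the density is zero precisely when $B_K(a;c) = \emptyset$. It therefore suffices to determine for which $c$ this set is empty.

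First I normalize so that $0 < a, c < 1$, exactly as in the proof of Proposition \ref{prop1}; under the hypothesis $2a \in \ZZ$ this forces $a = 1/2$. Since $M$ is even, every unit $u \in (\ZZ/M\ZZ)^\times$ is odd, so $\{-u^j a\} = 1/2$ for all $j$, and the defining inequality for $B_K(a;c)$ reduces to the single family $\{-u^j c\} \leq 1/2$ for every $j \geq 0$.

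For the forward direction, if $c < 1/2$ then taking $j = 0$ yields $\{-c\} = 1 - c > 1/2$ \emph{independently of $u$}, so the inequality fails universally and $B_K(a;c) = \emptyset$. Conversely, if $c \geq 1/2$ I would exhibit $u = 1 \in B_K(a;c)$: for this class the whole family of inequalities collapses to $1 - c \leq 1/2$, which holds by hypothesis, and $u = 1$ qualifies as a split class because $2\Delta_K \mid M$ ensures that every prime $p \equiv 1 \pmod{M}$ splits in $K$. Hence $\abs{B_K(a;c)} \geq 1$ and the density is strictly positive.

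There is no real obstacle beyond unpacking the definition of $B_K(a;c)$; the only point worth flagging is the verification that the identity class $u = 1$ is split, which is immediate from the divisibility $2\Delta_K \mid M$.
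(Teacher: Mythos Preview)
Your proof is correct and follows essentially the same route as the paper. Both arguments reduce the question to whether $1 \in B_K(a;c)$: the paper invokes the structural fact that $B_K(a;c)$ is a union of cyclic subgroups of $(\ZZ/M\ZZ)^\times$, while you reach the same reduction directly by noting that the $j=0$ condition $\{-u^0 c\} = \{-c\} \leq \tfrac12$ is independent of $u$, so its failure rules out every class simultaneously. Your explicit verification that $u=1$ is a split class (via $\Delta_K \mid M$) fills in a detail the paper leaves implicit.
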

\begin{proof}
As the set of bounded primes is a union of cyclic subgroups of $(\ZZ/M\ZZ)^\times$, the set of bounded primes is empty if and only if \(1\) is not in the bounded prime set. Thus, the Corollary follows from Proposition \ref{prop1}.
\end{proof}

Now we treat the case where $2a \not \in \ZZ$.
\begin{prop}
\label{prop2}
Given two rational parameters $a$ and $c$ where $ a \neq \frac{1}{2}$, and a quadratic number field \(K\) defined by $K = \QQ(\sqrt{D})$, taking M to be the least common multiple of the denominators  of \(a\), \(c\) and the discriminant of \(K\), consider the hypergeometric series defined by:

\[
_2F_1(a+\sqrt D, a-\sqrt D,c ; z) := \sum_{n=0}^\infty \frac{(a+\sqrt D)_n(a-\sqrt D)_n}{(c)_n n!}z^n.
\]

The following set provides a lower bound on the density of bounded primes in the denominators of the series:
\[
B_K(a;c) \df \{u\in \left(\ZZ/M\ZZ\right)^\times {\vert}
\{-u^jc\}\leq\tfrac 12\{-2u^ja\}
 \text{ for all j, and \(u\) splits in K}\}.
\]
This set is \emph{equal} to the set of all equivalence classes such that $F_D$ is $p$-adically bounded for primes in the corresponding congruence if Conjecture \ref{conj} holds.
\end{prop}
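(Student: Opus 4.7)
The plan is to mirror the proof of Proposition \ref{prop1}, with the universal half-bound $(p-1)/2$ (which arose from the digits of $-1$ when $2a\in\ZZ$) replaced by half of the $j$-th digit of the trace $2a-2$. First I would use the classical contiguity relations for ${}_2F_1$ to normalize $0<a,c<1$ outside a finite exceptional set of primes, so that Lemma \ref{l:digitformula} applies and gives $(c-1)[j]=\lfloor\{-p^jc\}p\rfloor$, and likewise for $(2a-2)[j]$. The hypothesis $a\neq 1/2$ guarantees that $2a-2$ is a nonzero rational and in particular not $-1$, so its digits are not universally $p-1$; this is the substantive change from Proposition \ref{prop1}.

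For the unconditional lower bound, let $u\in B_K(a;c)$ and let $p\equiv u\pmod M$ be a sufficiently large split prime. The defining inequality $\{-u^jc\}\leq\tfrac12\{-u^j(2a)\}$ translates, for $p$ large, into the digit inequality $(c-1)[j]\leq\tfrac12(2a-2)[j]$. The identity
\[
(a+\sqrt D-1)+(a-\sqrt D-1)=2a-2
\]
combined with Lemma \ref{l:digitsum} yields
\[
(a+\sqrt D-1)[j]+(a-\sqrt D-1)[j]\geq(2a-2)[j]-1,
\]
so at least one of these two digits is $\geq\lceil((2a-2)[j]-1)/2\rceil\geq(c-1)[j]$ for $p$ large. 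The carry-balancing argument from the proof of Theorem \ref{t:rational}, applied via Corollary \ref{c:FGM}, then shows that every carry from the denominator parameter $c$ is offset by a carry from either $a+\sqrt D$ or $a-\sqrt D$, so that $F_D\in\pseries{\ZZ_p}{z}$.

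For the converse I would assume Conjecture \ref{conj} and take $u\notin B_K(a;c)$. If $u$ does not split in $K$, Proposition \ref{p:split} already rules out boundedness at all but finitely many $p\equiv u\pmod M$; otherwise there is a fixed index $j$ with $\{-u^jc\}>\tfrac12\{-u^j(2a)\}$. Since both sides lie in $\tfrac{1}{2M}\ZZ$, the gap is at least $1/(2M)$, and periodicity of the $p$-adic expansions of the rationals $c$ and $2a$ modulo $M$ gives
\[
(c-1)[j+nA]-\tfrac12(2a-2)[j+nA]\geq\tfrac{p}{2M}-1
\]
for every $n\geq0$, where $A$ is the multiplicative order of $p$ modulo $M$. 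Having both $(a\pm\sqrt D-1)[m]<(c-1)[m]$ forces $(a+\sqrt D-1)[m]$ into a symmetric window of width $\approx p/M$ centered at $(2a-2)[m]/2$. Applying Conjecture \ref{conj} to $a+\sqrt D\in K$, with the arithmetic progression $n=Am+j$ and a pair of endpoints $0\leq u_0<v_0\leq 1$ chosen so that $(u_0(p-1),v_0(p-1))$ sits inside this window, will yield an infinite sequence of indices $x_n\in j+A\ZZ_{\geq 0}$ at which both $(a\pm\sqrt D-1)[x_n]$ are strictly less than $(c-1)[x_n]$. The truncation $k_r=\sum_{n\leq r}(p-(c-1)[x_n])p^{x_n}$ then yields, through Corollary \ref{c:FGM}, coefficients of $F_D$ whose $p$-adic valuations tend to $-\infty$.

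The hard part will be the quantitative bookkeeping in this final step. Lemma \ref{l:digitsum} produces four possible carry configurations contributing correction terms of size $0,\pm 1,\pm p$ to the sum $(a+\sqrt D-1)[m]+(a-\sqrt D-1)[m]$, which can shift the effective target window by $\pm 1$ or even straddle the boundary digit $p-1$. I will need to verify that, after these corrections, the chosen interval $(u_0(p-1),v_0(p-1))$ remains a genuine subinterval of $[0,p-1]$ and that the strict inequality $\max((a+\sqrt D-1)[x_n],(a-\sqrt D-1)[x_n])<(c-1)[x_n]$ survives. Fortunately the rational gap $p/(2M)$ grows linearly in $p$, while all carry corrections are $O(1)$, so taking $p$ sufficiently large compared to $M$ absorbs them and the argument goes through.
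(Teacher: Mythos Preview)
Your overall strategy matches the paper's proof: normalize to $0<a,c<1$, use Lemma~\ref{l:digitsum} applied to the trace $2a-2$ for the boundedness direction, and for the converse combine periodicity of the rational digits with Conjecture~\ref{conj} to produce infinitely many indices where the $c$-digit strictly dominates both numerator digits. The boundedness direction is fine as written.

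The gap is in your last paragraph. You correctly list correction terms of size $\pm p$ coming from the carry cases of Lemma~\ref{l:digitsum}, but then assert that ``all carry corrections are $O(1)$'' and that the linear gap $p/(2M)$ absorbs them. The $+\alpha p$ terms with $\alpha\in\{1,2\}$ are \emph{not} $O(1)$. If along your subsequence one has
\[
(a+\sqrt D-1)[x_n]+(a-\sqrt D-1)[x_n]=(2a-2)[x_n]+\alpha p-\epsilon
\]
with $\alpha\geq 1$, then placing $(a+\sqrt D-1)[x_n]$ near $(2a-2)[x_n]/2$ only forces $(a-\sqrt D-1)[x_n]$ near $(2a-2)[x_n]/2+\alpha p$, and nothing yet prevents this from landing inside $[0,p-1]$ above $(c-1)[x_n]$ when your window is as wide as $\approx p/M$.

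The paper does not absorb this correction; it \emph{excludes} it. One takes the Conjecture~\ref{conj} window of half-width $\tfrac{1}{K}\lfloor p/M\rfloor$ for a constant $K>1$ chosen so that $\tfrac{1}{KM}<\tfrac{1}{2p}(2a-1)[x_n]$. Such a $K$ exists uniformly in large $p$ because every periodic digit $(2a-1)[x_n]$ is nonzero once $p$ exceeds the denominator of $2a$, by the final clause of Lemma~\ref{l:digitformula}. With this narrower window, the upper bound on $(a+\sqrt D-1)[x_n]$ combined with the trivial bound $(a-\sqrt D-1)[x_n]\leq p-1$ forces $\alpha<1$, hence $\alpha=0$; only then does the desired strict inequality on the maximum follow. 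Your sketch is missing precisely this shrinking step and the appeal to the nonvanishing of $(2a-1)[x_n]$.
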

\begin{proof}
Let $p$ be a prime that is large enough, so that $p$ is congruent mod $M$ to an element of $B_K(a;c)$. We will show that our hypergeometric series is $p$-adically bounded for all such primes. Thus, we are assuming that $p$ satisfies
\[
\{-p^jc\}\leq \tfrac 12\{-2p^ja\}
\]
for all $j\in \ZZ$. Multiply by $p$, take floors, and use both $\lfloor \tfrac 12 x\rfloor \leq \tfrac 12 \lfloor x\rfloor$ and Lemma \ref{l:digitformula}:
\[
(c-1)[j] \leq \lfloor \tfrac 12\{-2p^ja\}p\rfloor \leq \tfrac 12 \lfloor\{-2p^ja\}p\rfloor \leq\tfrac 12((2a-1)[j]).
\]

That is, we are assuming that for all $j$ we have
\begin{equation}
\label{eq:firstdirection}
(c-1)[j] \leq \tfrac 12 ((2a-1)[j]).    
\end{equation}

Now, by Lemma \ref{l:digitsum} with $x=a+b\sqrt{D}-1$ and $y=a-b\sqrt{D}-1$, so that $z = x+y = 2a-2$, we deduce that
\[
(a+b\sqrt{D}-1)[j]+(a-b\sqrt{D}-1)[j] \geq (2a-2)[j]-1
\]
for all $j$. It follows that for all $j$,
\[
\max ((a+b\sqrt{D}-1)[j],(a-b\sqrt{D}-1)[j]) \geq \tfrac 12((2a-2)[j]-1)
\]
For all but finitely many $j$ we have $(2a-2)[j] = (2a-1)[j]$, and so for all but finitely many $j$ we have
\[
\max ((a+b\sqrt{D}-1)[j],(a-b\sqrt{D}-1)[j]) \geq \tfrac 12((2a-1)[j]-1).
\]
Combining this with inequality \eqref{eq:firstdirection}, we obtain that for all but finitely many $j$ we have
\[
    \max ((a+b\sqrt{D}-1)[j],(a-b\sqrt{D}-1)[j]) \geq (c-1)[j]-\tfrac 12.
\]
But since the left hand side is an integer, as is $(c-1)[j]$, it follows that there exists an $N$ such that for all $j > N$ we have
\begin{equation}
\label{eq:maxwithc}
    \max ((a+b\sqrt{D}-1)[j],(a-b\sqrt{D}-1)[j]) \geq (c-1)[j].
\end{equation}
Now again, the proof can be completed in this case using Corollary \ref{c:FGM} and the argument of \cite{FGM}: equation \eqref{eq:maxwithc} assures that carries arising in the formula of Corollary \ref{c:FGM} from $c-1$ are balanced by carries arising from at least one of the numerator parameters $a\pm b\sqrt{D}-1$. Hence, $F_D$ is $p$-adically integral for all such primes congruence to elements of $B_K(a;c)$.

Now, assume Conjecture \ref{conj}, and suppose that $p$ is a prime that is not congruent to an element of $B_K(a;c)$. This means that for some $j$ we have 
\[
\{-p^jc\} > \tfrac12\{-2p^ja\}.
\]
In the inequality above, both sides are rational numbers such that multiplying by $M$ yields an integer. Thus, the inequality above implies that \(\{-p^jc\} \ge \tfrac12\{-2p^ja\} + \frac1M\). Then we deduce
\[
\floor{\{-p^jc\}p} \ge \floor{\tfrac12\{-2p^ja\}p + \frac{p}{M}} \geq \floor{\tfrac12\{-2p^ja\}p} + \floor{\frac{p}{M}}.
\]
Therefore, by Lemma \ref{l:digitformula} and the above mentioned inequality \(\floor{\tfrac12x}\geq\tfrac12\floor{x}\), we arrive at
\begin{equation}
\label{e:otherway}
(c-1)[j] \geq \tfrac12((2a-1)[j])+\floor{\tfrac pM}.
\end{equation}
And as above, if $A$ is the order of $p \pmod{M}$, then the expansion of $c-1$ is periodic of period $A$, and the expansion of $2a-1$ is eventually periodic of period $A$. So, after possibly replacing $j$ by some large term $j+nA$ to avoid the part of the expansion of $2a-1$ that is not periodic, we deduce that
\begin{equation}
    \label{eq:cboundprop2}
    \floor{\tfrac pM}+\tfrac12((2a-1)[j+nA]) \leq (c-1)[j+nA]  
\end{equation}
for all integers $n\geq 0$, since both sequences of digits in inequality \eqref{eq:cboundprop2} are independent of $n$.

Up until this point, the proof has been essentially the same as the proof of Proposition \ref{prop1}. At this point the proofs diverge, as now the relationship between the digits of $a+b\sqrt{D}-1$ and $a-b\sqrt{D}-1$ is not as simple as in the case when $a=\tfrac 12$.

Let us first observe that if $p$ is large enough, where this restriction on the size only depends on $a$, then all of the $p$-adic digits of $2a-1$ in its periodic part are nonzero by the last claim in Lemma \ref{l:digitformula}. Therefore, we may suppose that $(2a-1)[j] \neq 0$. Choose a constant $K > 1$ such that
\[
\frac{1}{KM} < \frac{1}{2p}((2a-1)[x_n]).
\]
It may look as though this constant depends on $p$, but by Lemma \ref{l:digitformula}, when $p$ is large, the digits of $(2a-1)$ are all approximately equal to rational numbers of the form $\alpha p/M$ for $\alpha =1,2,\ldots, M-1$. Hence when $p$ is large enough, the constant $K$ can be chosen independently of $p$.

Conjecture \ref{conj} yields an infinite subsequence $(x_n)$ of the arithmetic progression \((j+nA)\) such that
\begin{equation}
    \label{eq:bsqrtDprop2}
    \tfrac12((2a-1)[x_n])-\tfrac 1K\floor{\tfrac pM} < (a+b\sqrt D-1)[x_n] < \tfrac12((2a-1)[x_n])+\tfrac 1K\floor{\tfrac pM} 
\end{equation}
for all $n\geq 0$, assuming $p$ is large enough.

As in the proof of Lemma 4.7, we have one of the following four possibilities depending on how certain carries take place when adding $a\pm b\sqrt{D}-1$:
\[
(a+b\sqrt{D}-1)[x_n] + (a-b\sqrt{D}-1)[x_n] =\begin{cases}
(2a-1)[x_n] + \alpha p,\\
(2a-1)[x_n]-1 + \alpha p,\\
\end{cases}
\]
where $\alpha \in \{0,1,2\}$, for $j$ large enough so that $(2a-1)[x_n]=(2a-2)[x_n]$. However, from equation \eqref{eq:bsqrtDprop2} and the trivial bound $(a-b\sqrt{D}-1)[x_n] \leq p-1$ we obtain
\[
(a+b\sqrt{D}-1)[x_n] + (a-b\sqrt{D}-1)[x_n]\leq \tfrac12((2a-1)[x_n])+\tfrac 1K\floor{\tfrac pM} + p-1.
\]
It follows that
\[
\alpha  \leq \tfrac {1}{pK}\floor{\tfrac pM} + 1-\tfrac{1}{2p}((2a-1)[x_n]) \leq 1+\tfrac {1}{KM}-\tfrac{1}{2p}((2a-1)[x_n]) < 1,
\]
where the last inequality follows by our choice of $K$. Therefore, $\alpha=0$ and we have
\[
(a+b\sqrt{D}-1)[x_n] + (a-b\sqrt{D}-1)[x_n]\leq (2a-1)[x_n],
\]
so that if we apply the left side of inequality \eqref{eq:bsqrtDprop2} then
\[
(a-b\sqrt{D}-1)[x_n] \leq \tfrac 12((2a-1)[x_n] + \tfrac{1}{K}\floor{\tfrac pM}
\]
Now, the line above and inequalities \eqref{eq:bsqrtDprop2} and \eqref{eq:cboundprop2} give
\[
\max((a+b\sqrt{D}-1)[x_n], (a-b\sqrt{D}-1)[x_n]) < (c-1)[x_n]
\]
for all $n$ large enough, and thus $F_D$ is unbounded at $p$ using the same argument as in Proposition \ref{prop1}
\end{proof}

\begin{cor}
If Conjecture \ref{conj} holds, then a set of parameters $(a,c)$ as above with \(a\not\in \tfrac12 \ZZ\) has series $F_D$ with a density of bounded primes equal to zero if and only if \(2\{-c\}>\{-2a\}\).
\end{cor}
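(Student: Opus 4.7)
The plan is to imitate the corollary following Proposition \ref{prop1}. By Proposition \ref{prop2} combined with Conjecture \ref{conj}, the density of bounded primes for $F_D$ equals $|B_K(a;c)|/\phi(M)$, so it suffices to show that $B_K(a;c)$ is empty precisely when $2\{-c\}>\{-2a\}$.

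The first step is to observe that $B_K(a;c)$, when nonempty, is a union of cyclic subgroups of $(\ZZ/M\ZZ)^\times$, and in particular must contain the identity class $u=1$. Indeed, if $u\in B_K(a;c)$ and $k\in\ZZ$, then the inequalities $\{-(u^k)^j c\}\leq \tfrac 12\{-2(u^k)^j a\}$ for all $j$ form a subfamily of those defining membership of $u$, so they are automatic; and the splitting condition passes to powers because it is cut out by the quadratic character $\chi_K$ of $K$ on $(\ZZ/M\ZZ)^\times$, and $\chi_K(u)=1$ implies $\chi_K(u^k)=1$. Consequently $B_K(a;c)=\emptyset$ if and only if $1\notin B_K(a;c)$.

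The remaining step is to unpack membership of the class $u=1$. The splitting condition is automatic there, since $\chi_K(1)=1$, so primes $\equiv 1\pmod{\Delta_K}$ always split in $K$. Moreover, at $u=1$ all of the $j$-indexed inequalities collapse to the single condition $\{-c\}\leq\tfrac 12\{-2a\}$, equivalently $2\{-c\}\leq\{-2a\}$. Negating this inequality yields the stated criterion. There is no serious obstacle; the only new ingredient compared with the rational-case corollary is noting that the splitting condition respects taking powers in $(\ZZ/M\ZZ)^\times$, which is immediate from the fact that it is defined by a quadratic Dirichlet character.
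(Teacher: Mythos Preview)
Your proof is correct and follows essentially the same approach as the paper: reduce to checking whether $1\in B_K(a;c)$ via the observation that $B_K(a;c)$ is a union of cyclic subgroups, then read off the $j=0$ inequality. You have simply filled in more detail than the paper does, notably the explicit verification that both the inequality conditions and the splitting condition pass to powers of $u$.
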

\begin{proof}
Notice that the set of bounded primes is a union of cyclic subgroups, if nonempty. Therefore, the set of bounded primes is empty if and only if $1$ is not in the bounded prime set. This is equivalent to the statement of the Corollary by the preceding proposition.
\end{proof}

\section{Towards a quadratic irrational Schwarz list}
\label{s:schwarz}
%\textcolor{red}{We should assume $a\neq c$ below, because I think some of these computations and bounds are wrong if $a=c$. E.g. we should have $D(a,a) =1$}

Obviously, due to the splitting condition on $p$ in $K = \QQ(\sqrt{D})$, the density
\[
D_K(a;c) = \frac{\abs{B_K(a;c)}}{\phi(M)}
\]
is at most equal to $1/2$. In analogy with the Schwarz list, it is natural to ask how often this maximal density is reached. Computations suggest that frequently we have $D_K(a;c) \leq 1/4$, but this is not always the case.
%See Table \ref{tab:largedensities} for some examples of densities that are greater than $1/4$.

%\begin{table}
%\renewcommand{\arraystretch}{1.3}
%    \centering
%    \begin{tabular}{c|c|c|c}
%         $D$&$a$&$c$&$D_K(a;c)$ \\
%         \hline
%%         -1&1/16&7/12&3/8\\
%         -3&2/7& 5/6& 1/3\\
%         -3&1/40&5/6& 5/16
%    \end{tabular}
%    \caption{Some examples where $D_K(a;c) > 1/4$.}
 %   \label{tab:largedensities}
%\end{table}

To understand why densities frequently do not exceed $1/4$, let us introduce some more notation: let $M_1$ denote the lcm of the denominators of $a$ and $c$, and let $M$ denote the lcm of $M_1$ and $\Delta_K$, where $\Delta_K$ is the discriminant of our field $K=\QQ(\sqrt{D})$. Define $B(a;c)$ in the same way as $B_K(a;c)$ except we use $M_1$ in place of $M$ and we ignore the splitting condition. Then define
\begin{equation}
\label{e:dac}
D(a;c) = \frac{\abs{B(a;c)}}{\phi(M_1)}.
\end{equation}

Suppose that $M_1$ and $\Delta_K$ are coprime. Then by the Chinese remainder theorem we have
\[
(\ZZ/M\ZZ)^\times \cong (\ZZ/M_1\ZZ)^\times \times (\ZZ/\Delta_K\ZZ)^\times
\]
and this isomorphism induces a bijection of sets:
\begin{equation}
\label{eq:coprimeDbound}
B_K(a;c) \cong B(a;c) \times \{u\in (\ZZ/\Delta_K\ZZ)^\times \mid u \textrm{ splits in }K\}.
\end{equation}
\begin{prop}
\label{p:Dbound}
    Suppose that $a,c$ are rationals such that \(2c\) and \(2a\) are not both integers. Then 
    \[
    D(a;c) \leq 1/2.
    \]
\end{prop}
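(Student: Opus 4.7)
The plan is to exhibit a fixed-point free involution $\iota$ on $(\ZZ/M_1\ZZ)^\times$ such that at most one of $u$ and $\iota(u)$ can lie in $B(a;c)$; orbit counting then yields $\abs{B(a;c)} \leq \phi(M_1)/2$ immediately. The natural choice is $\iota(u) = -u$, and the essential tool is the reflection identity $\{x\} + \{-x\} = 1$ for non-integer rationals $x$, which swaps the two sides of the defining inequalities of $B(a;c)$ in a way that is incompatible with $u$ and $-u$ both satisfying them.

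First I would normalize $a, c \in (0,1)$ as in the opening of the proof of Proposition \ref{prop1}; this is harmless since the inequalities defining $B(a;c)$ depend only on $a, c \pmod \ZZ$. Next I would verify that $\iota$ has no fixed points: $-u \equiv u \pmod{M_1}$ combined with $\gcd(u, M_1) = 1$ forces $M_1 \in \{1, 2\}$, which after normalization pins $a, c \in \{1/2\}$ and makes both $2a$ and $2c$ integers, contradicting the hypothesis.

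The heart of the argument is to rule out $u, -u \in B(a;c)$, and here I would split into three subcases following the dichotomy used to define $B(a;c)$. Subcase (A): if $2a \in \ZZ$ and $2c \notin \ZZ$, so $a = 1/2$, then $M_1$ is even, $u$ is odd, and the defining condition reduces to $\{-u^j c\} \leq 1/2$; applying this at $j = 1$ to both $u$ and $-u$ and invoking the reflection identity forces $\{-uc\} = 1/2$, i.e., $2uc$ is a half-integer, which after a short argument using $\gcd(u, M_1) = 1$ forces $2c \in \ZZ$, a contradiction. Subcase (B): if $2a \notin \ZZ$ and $2c \in \ZZ$, then $c = 1/2$ and the condition $1/2 \leq \tfrac12\{-2u^j a\}$ fails trivially since $\{\cdot\} < 1$, so $B(a;c) = \emptyset$. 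Subcase (C): if neither $2a$ nor $2c$ lies in $\ZZ$, I apply the inequality of Proposition \ref{prop2} at $j = 1$ to both $u$ and $-u$; converting the $-u$ version via the reflection identity yields $\{-uc\} \geq 1/2 + \tfrac12\{-2ua\}$, which combined with $\{-uc\} \leq \tfrac12\{-2ua\}$ from the inequality for $u$ produces $0 \geq 1/2$, a contradiction.

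The main obstacle is the bookkeeping required to invoke the reflection identity, namely that $uc$ and $2ua$ are nonintegral. This reduces to confirming that the denominators of $c$ and of $2a$ divide $M_1$, together with $\gcd(u, M_1) = 1$, and then using the case-specific hypotheses ($c \in (0,1)$ so $c \notin \ZZ$, and $2a \notin \ZZ$, respectively). Past this routine check the argument is entirely elementary, and it simultaneously produces the conclusion in all three subcases.
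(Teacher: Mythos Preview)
Your proof is correct and takes essentially the same approach as the paper: both use the involution $u \mapsto -u$ and the reflection identity $\{-x\} = 1 - \{x\}$ applied at the $j=1$ condition to show that $u$ and $-u$ cannot both satisfy the defining inequality of $B(a;c)$. The paper organizes the argument into two cases ($2a \notin \ZZ$ versus $2a \in \ZZ$) rather than your three, absorbing your subcase~(B) into its first case, and it leaves the fixed-point-free check for $\iota$ implicit; the substance is otherwise identical.
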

\begin{proof}
We first consider the case where \(2a\not\in\ZZ\). Notice that
\[
B(a;c) \subseteq A(a;c) = \{u \in (\ZZ/M_1\ZZ)^\times \mid \{-uc\} \leq \tfrac 12 \{-2ua\}\}.
\]
It suffices to show that at most one of \(u\) and \(-u\) are in the set \(A(a;c)\). 
Assume towards a contradiction that both $u$ and $-u$ are in A. Considering $u \in A(a;c)$, we have:
\begin{align*}
\{-uc\} &\leq \tfrac{1}{2}\{-2ua\}\\
\intertext{Noting $ \{-x\} = 1 - \{x\}$ holds when $x \in \QQ\setminus\ZZ$,}
1 - \{uc\} & \leq \tfrac{1}{2}(1 - \{2ua\})\\
1 - \{uc\} & \leq \tfrac{1}{2} - \tfrac{1}{2}\{2ua\}\\
- \{uc\} & \leq -\tfrac{1}{2} - \tfrac{1}{2}\{2ua\}\\
\tfrac{1}{2} + \tfrac{1}{2}\{2ua\} & \leq  \{uc\}.
\end{align*}
As $-u \in A(a;c)$, we also have 
\[\{uc\} \leq \tfrac{1}{2}\{2ua\}.\]
Therefore,
\[\tfrac{1}{2} + \tfrac{1}{2}\{2ua\} \leq \{uc\} \leq \tfrac{1}{2}\{2ua\},\] a contradiction! Thus only one of \(u\) and \(-u\) are in \(A(a;c)\) in this case.

In the case where \(2a\in\ZZ\), we must instead consider
\[
B(a;c)\subseteq A(a;c) = \{u\in(\ZZ/M\ZZ)^\times \mid \{-uc\}\le\tfrac12\}.
\]
Once again, it suffices to show that at most half of all \(u\) can appear in this set. 
Assume towards a contradiction that \(u\) and \(-u\) are both in this set. Taking \(u\in A(a;c)\), we have:
\begin{align*}
\{-uc\}     &\le \tfrac12\\
1-\{uc\}    &\le \tfrac12\\
-\{uc\}     &\le -\tfrac12\\
\tfrac12    &\le \{uc\}
\end{align*}

By our assumptions, \(-u\in A(a;c)\), so
\[
\{uc\} \le \tfrac12.
\]

Thus, since we have assumed that in this case \(2c\not\in\ZZ\), this is a contradiction.
So \(B(a;c)\) contains at most half of the elements of \((\ZZ/M_1\ZZ)^\times\), and so \(D(a;c)\) is at most \(\tfrac12\).

\end{proof}

Now we can prove that the field $K$ must be ramified at some of the primes dividing our hypergeometric parameters in order to find examples $(a,c,K)$ where $D_K(a;c)=\tfrac 12$:
\begin{cor}
\label{cor:Dbound}
    Suppose that $\gcd(\Delta_K,M_1) = 1$ and that $2a$ and $2c$ are not both in $\ZZ$. Then
    \[
    D_K(a;c) \leq 1/4.
    \]
\end{cor}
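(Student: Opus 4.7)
The plan is to combine equation \eqref{eq:coprimeDbound} with Proposition \ref{p:Dbound} and the classical fact that the split-prime condition in a quadratic field is governed by a nontrivial quadratic character modulo $\Delta_K$. Under the coprimality hypothesis $\gcd(M_1,\Delta_K)=1$, the Chinese remainder theorem and multiplicativity of Euler's totient give $\phi(M) = \phi(M_1)\phi(\Delta_K)$, and \eqref{eq:coprimeDbound} gives
\[
|B_K(a;c)| = |B(a;c)| \cdot S,
\]
where $S := |\{u\in (\ZZ/\Delta_K\ZZ)^\times \mid u \text{ splits in } K\}|$. Dividing the first equation into the second, I obtain the factorisation
\[
D_K(a;c) \;=\; D(a;c) \cdot \frac{S}{\phi(\Delta_K)}.
\]

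Next I would evaluate the second factor exactly. The splitting behavior of an unramified rational prime $p$ in $K/\QQ$ is controlled by the Kronecker symbol $\chi_K = \bigl(\tfrac{\Delta_K}{\cdot}\bigr)$: namely $p$ splits in $K$ if and only if $\chi_K(p) = +1$. Since $\chi_K$ factors through $(\ZZ/\Delta_K\ZZ)^\times$ as a nontrivial quadratic Dirichlet character, its kernel has index exactly two in the unit group, so $S = \phi(\Delta_K)/2$. Consequently
\[
D_K(a;c) = \tfrac 12 \, D(a;c).
\]

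Finally, the hypothesis that $2a$ and $2c$ are not both in $\ZZ$ is exactly the hypothesis of Proposition \ref{p:Dbound}, so $D(a;c) \leq 1/2$, and the identity above then yields $D_K(a;c) \leq 1/4$ as required. There is no genuine obstacle in this argument; it is essentially bookkeeping via CRT together with one invocation of Proposition \ref{p:Dbound} and the standard computation of the density of split residues. The only subtlety worth flagging is that the coprimality hypothesis is used twice — once to split $\phi(M)$ multiplicatively, and once to be allowed to view the splitting condition as depending only on the $\Delta_K$-component of a residue modulo $M$; without coprimality the character $\chi_K$ and the set $B(a;c)$ could interact on common prime factors and the clean product decomposition \eqref{eq:coprimeDbound} would fail.
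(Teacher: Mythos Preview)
Your proof is correct and follows essentially the same route as the paper: the paper's proof simply writes $D_K(a;c) \leq D(a;c)\times \tfrac12 \leq \tfrac14$, invoking equation \eqref{eq:coprimeDbound} and Proposition \ref{p:Dbound} without further comment. You have merely unpacked the middle step, making explicit via the Kronecker character $\chi_K$ that the split residues form exactly half of $(\ZZ/\Delta_K\ZZ)^\times$, so that the first inequality is in fact an equality $D_K(a;c) = \tfrac12 D(a;c)$.
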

\begin{proof}
    Combining equation \eqref{eq:coprimeDbound} and Proposition \ref{p:Dbound} we find that
    \[
    D_K(a;c) \leq D(a;c)\times \tfrac 12 \leq \tfrac 14.
    \]
\end{proof}

Table \ref{tab:largedensitiesnoK} lists pairs of rational parameters $a,c \in (0,1)$ with $a$ and $c$ of height at most $48$, and where $2a \not \in \ZZ$, such that $D(a,c) > 1/4$. We do not know if there exists an infinite number of such examples as the height of $a$ and $c$ grow.

Since we have $D_K(a,c) \leq D(a,c)$ for all fields $K = \QQ(\sqrt{D})$, in each case from Table \ref{tab:largedensitiesnoK}, it is natural to ask whether we can find a field where equality holds. By Corollary \ref{cor:Dbound}, we know that in such examples, $D$ can't be coprime to the least common multiple of the denominators of $a$ and $c$. We thus examined all choices of $K=\QQ(\sqrt{D})$ for $D$ dividing this least common multiple. In all cases of pairs $(a,c)$ from Table \ref{tab:largedensitiesnoK} with $D(a,c)=1/2$, we found exactly one field $K$ such that $D_K(a,c)=1/2$. Further, it was always imaginary quadratic.

\nocite{MNT}
\nocite{kummer}

\begin{table}
\resizebox{\textwidth}{!}{
\renewcommand{\arraystretch}{1}
    \centering
    \begin{tabular}{p{2.8cm}p{2.8cm}p{2.8cm}p{2.8cm}p{2.8cm}p{2.8cm}}
    $D(a;c)=\tfrac12$  &$D(a;c)=\tfrac12$  & $D(a;c) = \tfrac38$& $D(a;c) = \tfrac38$&$D(a;c)=\tfrac5{16}$&$D(a;c)=\tfrac13$\\
    %\vspace{-12cm}
    \begin{tabular}{cc}
         $a$&$c$ \\
         \hline
         1/3&5/6\\
         2/3&2/3\\
         2/3&5/6\\
         1/4&3/4\\
         1/4&5/6\\
         3/4&3/4\\
         3/4&5/6\\
         1/6&2/3\\
         1/6&5/6\\
         5/6&5/6\\
         1/8&3/4\\
         1/8&5/8\\
         3/8&7/8\\
         5/8&3/4\\
         5/8&5/8\\
         7/8&7/8\\
         1/12&2/3\\
         1/12&5/6\\
         1/12&7/12\\
         5/12&11/12\\
         7/12&2/3\\
         7/12&5/6\\
         7/12&7/12\\
         11/12&11/12\\
         1/16&5/8\\
         3/16&7/8\\
         9/16&5/8\\
         11/16&7/8\\
         1/20&11/20\\
         3/20&13/20\\
         7/20&17/20\\
         9/20&19/20\\
         11/20&11/20\\
         13/20&13/20\\
         17/20&17/20\\
         19/20&19/20
    \end{tabular}
    &
    %\vspace{-12cm}
    \begin{tabular}{cc}
    $a$&$c$\\
    \hline
    1/24&3/4\\
    1/24& 5/6\\
    1/24&7/12\\
    1/24&13/24\\
    5/24&3/4\\
    5/24&11/12\\
    5/24&17/24\\
    7/24&5/6\\
    7/24&19/24\\
    11/24&23/24\\
    13/24&3/4\\
    13/24&5/6\\
    13/24&7/12\\
    13/24&13/24\\
    17/24&3/4\\
    17/24&11/12\\
    17/24&17/24\\
    19/24&5/6\\
    19/24&19/24\\
    23/24&23/24\\
    1/40&11/20\\
    3/40&13/20\\
    7/40&17/20\\
    9/40&19/20\\
    21/40&11/20\\
    23/40&13/20\\
    27/40&17/20\\
    29/40&19/20\\
    1/48&13/24\\
    5/48&17/24\\
    7/48&19/24\\
    11/48&23/24\\
    25/48&13/24\\
    29/48&17/24\\
    31/48&19/24\\
    35/48&23/24
    \end{tabular}
    &
    \vspace{-9.215cm}
    \begin{tabular}{cc}
     $a$&$c$ \\
     \hline
     1/16&5/6\\
     1/16&7/12\\
     1/16&11/12\\
     1/16&17/24\\
     1/16&19/24\\
     3/16&5/6\\
     3/16&17/24\\
     3/16&19/24\\
     5/16&5/6\\
     5/16&11/12\\
     5/16&23/24\\
     7/16&23/24\\
     9/16&5/6\\
     9/16&7/12\\
     9/16&11/12\\
     9/16&17/24\\
     9/16&19/24\\
     11/16&5/6\\
     11/16&17/24\\
     11/16&19/24\\
     13/16&5/6\\
     13/16&11/12\\
     13/16&23/24\\
     15/16&23/24
     \end{tabular}
    &
    \vspace{-9.215cm}
    \begin{tabular}{cc}
     $a$&$c$ \\
     \hline
     1/48&5/6\\
     1/48&7/12\\
     1/48&17/24\\
     1/48&23/24\\
     5/48&11/12\\
     5/48&19/24\\
     7/48&5/6\\
     7/48&17/24\\
     7/48&23/24\\
     11/48&19/24\\
     13/48&5/6\\
     17/48&11/12\\
     25/48&7/12\\
     25/48&17/24\\
     25/48&5/6\\
     25/48&23/24\\
     29/48&11/12\\
     29/48&19/24\\
     31/48&5/6\\
     31/48&17/24\\
     31/48&23/24\\
     35/48&19/24\\
     37/48&5/6\\
     41/48&11/12
    \end{tabular}&
    \vspace{-9.215cm}
    \begin{tabular}{cc}
    $a$&$c$\\
    \hline
    1/32&11/12\\
    5/32&23/24\\
    9/32&11/12\\
    13/32&23/24\\
    17/32&11/12\\
    21/32&23/24\\
    25/32&11/12\\
    29/32&23/24\\
    1/40&5/6\\
    3/40&5/6\\
    7/40&5/6\\
    9/40&5/6\\
    21/40&5/6\\
    23/40&5/6\\
    27/40&5/6\\
    29/40&5/6
    \end{tabular}
    &
    \vspace{-9.215cm}
    \begin{tabular}{cc}
         $a$&$c$ \\
         \hline
         2/7&5/6\\
         11/14&5/6\\
         1/21&5/6\\
         2/39&5/6\\
         11/39&5/6\\
         23/42&5/6
    \end{tabular}
    \end{tabular}
    }
    \caption{All examples where $D(a;c) >1/4$ for $0<a,c<1$ with $2a\not \in \ZZ$, and the height of $a$ and $c$ are at most $48$.}
    \label{tab:largedensitiesnoK}
\end{table}

\begin{table}
\renewcommand{\arraystretch}{1.3}
    \centering
    \begin{tabular}{ccc}
         $a$&$c$&$D$\\
         \hline
         1/3&5/6&-3\\
         2/3&2/3&-3\\
         2/3&5/6&-3\\
         1/4&3/4&-1\\
         1/4&5/6&-3\\
         3/4&3/4&-1\\
         3/4&5/6&-3\\
         1/6&2/3&-3\\
         1/6&5/6&-3\\
         5/6&5/6&-3\\
         1/8&3/4&-1\\
         1/8&5/8&-2\\
         3/8&7/8&-2\\
         5/8&3/4&-1\\
         5/8&5/8&-2\\
         7/8&7/8&-2\\
         1/12&2/3&-3\\
         1/12&5/6&-3\\
         1/12&7/12&-1\\
         5/12&11/12&-1\\
         7/12&2/3&-3\\
         7/12&5/6&-3\\
         7/12&7/12&-1\\
         11/12&11/12&-1\\
         1/16&5/8&-2\\
         3/16&7/8&-2\\
         9/16&5/8&-2\\
         11/16&7/8&-2\\
         1/20&11/20&-5\\
         3/20&13/20&-5\\
         7/20&17/20&-5\\
         9/20&19/20&-5\\
         11/20&11/20&-5\\
         13/20&13/20&-5\\
         17/20&17/20&-5\\
         19/20&19/20&-5
    \end{tabular}
    \hspace{1cm}
    \begin{tabular}{ccc}
    $a$&$c$&$D$\\
    \hline
    1/24&3/4&-1\\
    1/24& 5/6&-3\\
    1/24&7/12&-1\\
    1/24&13/24&-6\\
    5/24&3/4&-1\\
    5/24&11/12&-1\\
    5/24&17/24&-6\\
    7/24&5/6&-3\\
    7/24&19/24&-6\\
    11/24&23/24&-6\\
    13/24&3/4&-1\\
    13/24&5/6&-3\\
    13/24&7/12&-1\\
    13/24&13/24&-6\\
    17/24&3/4&-1\\
    17/24&11/12&-1\\
    17/24&17/24&-6\\
    19/24&5/6&-3\\
    19/24&19/24&-6\\
    23/24&23/24&-6\\
    1/40&11/20&-5\\
    3/40&13/20&-5\\
    7/40&17/20&-5\\
    9/40&19/20&-5\\
    21/40&11/20&-5\\
    23/40&13/20&-5\\
    27/40&17/20&-5\\
    29/40&19/20&-5\\
    1/48&13/24&-6\\
    5/48&17/24&-6\\
    7/48&19/24&-6\\
    11/48&23/24&-6\\
    25/48&13/24&-6\\
    29/48&17/24&-6\\
    31/48&19/24&-6\\
    35/48&23/24&-6
    \end{tabular}
    \caption{Examples of \(a,c,D\) such that $D_K(a;c) = 1/2$ for \(K = \QQ(D)\).}
    \label{tab:dvaryk}
\end{table}

\bibliographystyle{plain}
\bibliography{refs}
\end{document}